 \theoremstyle{plain} 
\newtheorem{theo}{\indent\sc Theorem}[section]
\newtheorem{lemm}[theo]{\indent\sc Lemma}
\newtheorem{cor}[theo]{\indent\sc Corollary}
\newtheorem{prop}[theo]{\indent\sc Proposition}
\newtheorem*{theo*}{Theorem}
\newtheorem*{cor*}{Corollary}
\theoremstyle{definition} 
\newtheorem{defi}[theo]{\indent\sc Definition}
\newtheorem{rem}[theo]{\indent\sc Remark}
\newcommand{\vna}{von Neumann algebra}
\newcommand{\sps}{standard probability space}
\newcommand{\eqre}{equivalence relation}
\newcommand{\bieqre}{bimodule equivalence relation}
\newcommand{\oreqre}{orbit equivalence relation}
\newcommand{\Hs}{Hilbert space}
\newcommand{\ifff}{{if and only if}}
\newcommand{\alev}{a.e.}
\newcommand{\mesp}{measure space}
\newcommand{\wrt}{with respect to}
\newcommand{\R}{\mathbb R}
\newcommand{\C}{\mathbb C}
\newcommand{\h}{\mathcal H}
\newcommand{\G}{\Gamma}
\newcommand{\LM}{L^2(M)}
\newcommand{\AM}{A\subset M}
\newcommand{\NGH}{N_G(H)}
\newcommand{\NMA}{N_M(A)}
\newcommand{\hst}{\h_{s,t}}
\newcommand{\K}{\mathcal K}
\newcommand{\xist}{\xi_{s,t}}
\newcommand{\Ymus}{{\mathcal Y}}
\newcommand{\RT}{\mathcal R}
\newcommand{\Y}{{\mathcal{Y}}}
\newcommand{\WR}{\mathcal {B}}
\newcommand{\Ncal}{\mathcal N}
\newcommand{\intI}{\int^\oplus_Y}
\newcommand{\intII}{\int^\oplus_{Y^2}}
\newcommand{\Li}{{L^\infty(Y,\nu)}}
\newcommand{\Lii}{{L^\infty(Y^2,\mu)}}
\newcommand{\Ldd}{L^2(Y^2,\mu)}
\newcommand{\CY}{{\mathcal C(Y)}}
\newcommand{\IY}{\mathfrak I(Y,\nu)}
\newcommand{\wH}{\widehat H}
\newcommand{\LMl}{L^2(M_l)}
\newcommand{\AMl}{A_l\subset M_l}
\newcommand{\Al}{\mathcal A_l}
\newcommand{\otimesl}{\otimes_{l\in\Lambda}}
\newcommand{\bigotimesl}{\bigotimes_{l\in\Lambda}}
\newcommand{\NMAl}{N_{M_l}(A_l)}
\begin{document}

\title[The Takesaki equivalence relation]{The Takesaki equivalence relation for maximal abelian subalgebras} 

\author[A. Brothier]{Arnaud Brothier$^*$} 

\subjclass[2000]{Primary 46L10; Secondary 37A20.}
\keywords{Von Neumann algebra, maximal abelian subalgebra, equivalence relation, ergodic theory, representation.}
\thanks{$^{*}$Partially supported by the Region Ile de France and by ERC Starting Grant VNALG-200749.}
\address{
Institute of Mathematics of Jussieu \endgraf
Universtity Paris Diderot \endgraf
175 rue du Chevaleret \endgraf
Paris 75013 \endgraf
France
}
\email{brot@math.jussieu.fr}
\address{
K.U.Leuven \endgraf
Department of Mathematics\endgraf
Celestijnenlaan 200b - bus 2400\endgraf
BE-3001 Heverlee\endgraf
}
\email{arnaud.brothier@wis.kuleuven.be}
\begin{abstract}
For a maximal abelian subalgebra $A\subset M$ in a finite \vna, we consider an invariant due to Takesaki which is an equivalence relation on a standard probability space.
We give several characterizations of this invariant and show that it can be reconstructed from the $A$-bimodule structure of the GNS Hilbert space $\LM$.
In particular, we show that this invariant is induced by the action of the normalizer on $A$. Hence, this gives a new proof to a question of Takesaki.
\end{abstract}
\maketitle

\section*{Introduction}

In this paper, we study maximal abelian subalgebras (MASAs) in a finite \vna\ with a separable predual. 
We will always denote such an inclusion as $\AM$ and we fix a faithful normal unital trace $\tau$ on $M$.
The study of MASAs began with the work of Dixmier in 1954 \cite{Dixmier_anneaux_max_ab}. (See \cite{sinclair_smith_masas} for a general introduction on the subject.)
He considered the \textit{normalizer} $\NMA$ which is the group of unitaries $u\in M$ such that $uAu^*=A$.
In 1963, Takesaki introduced in \cite{takesaki_invariant_masa} a measure theoretical invariant for a MASA.
An explicit presentation of this invariant is given below.

Let us first define the \textit{Takesaki equivalence relation.}
We refer the reader to \cite[Chap.4, \S 8]{Takesaki_bouquin_I} for a presentation of the general theory of direct integrals of \Hs s, representations, and \vna s.
Let $Y$ be a compact Hausdorff space and $\nu$ a Borel probability measure on it such that $A$ is isomorphic to the \vna\  $L^\infty(Y,\nu)$.
We fix such an isomorphism and identify the two \vna s.
Let $\LM$ be the GNS Hilbert space associated to the trace $\tau$ and $x\mapsto x\Omega$ the embedding of $M$ in $\LM$.
Let $\pi,\rho$ be the left and right actions of $M$ on the Hilbert space $\LM$, i.e. $\pi(x)\rho(y)(z\Omega)=xzy\Omega$.
Consider the measurable field of \Hs s $\{\mathcal K_t,\,t\in Y\}$ such that $\LM$ is equal to the direct integral
$$\intI \mathcal K_t d\nu(t),$$
such that $\rho(A)$ becomes the algebra of all diagonalizable operators.
Let $B\subset M$ be a separable $C^*$-subalgebra that is dense for the weak topology.
Consider the measurable field of representations of $B$, $\{\pi_t,\,t\in Y\}$, such that
$$\pi\vert_B=\intI \pi_t d\nu(t),$$
where $\pi\vert_B$ denotes the restriction to $B$ of the standard representation.
%%%%%%%%%%%%%%%%%%%%%%%%%%%%%%%%%%%%%%%%%%%%%%%%%%%%%%%%%%%%%%%%%%%%%%%%%%%%%%%%%%%%%%%%%%%%%%%%%%%%%%%%%%
\begin{defi}\label{defi_Tak_Tak_equirel}
Let $\RT$ be the equivalence relation on $Y$ defined by $(s,t)\in\RT$ if and only if the representation $\pi_s$ is unitarily equivalent to $\pi_t$.
It is the \textit{Takesaki equivalence relation}.
\end{defi}
%%%%%%%%%%%%%%%%%%%%%%%%%%%%%%%%%%%%%%%%%%%%%%%%%%%%%%%%%%%%%%%%%%%%%%%%%%%%%%%%%%%%%%%%%%%%%%%%%%%%%%%%%%%
We write $\pi_s\simeq \pi_t$ to say that the two representations are unitarily equivalent.
%%%%%%%%%%%%%%%%%%%%%%%%%%%%%%%%%%%%%%%%%%%%%%%%%%%%%%%%%%%%%%%%%%%%%%%%%%%%%%%%%%%%%%%%%%%%%%%%%%%%%%%%%%%
\begin{defi}\label{defi_approx}
Let $E,F\subset Y^2$ be some subsets, we say that $E$ is \textit{weakly contained} in $F$ if there exists a null set $N\subset Y$ such that $E\backslash N^2\subset F$, where $E\backslash N^2=\{x\in E,\,x\notin N^2\}$.
We denote this by $E\prec F$. 
This defines a partial order.
We say that $E$ is \textit{equivalent} to $F$ if $E\prec F$ and $F\prec E$ and denote it by $E\equiv F$.
This defines an equivalence relation on the subsets of $Y^2$. We denote the equivalence class of a subset $E$ by $\widehat E$. 
\end{defi}
%%%%%%%%%%%%%%%%%%%%%%%%%%%%%%%%%%%%%%%%%%%%%%%%%%%%%%%%%%%%%%%%%%%%%%%%%%%%%%%%%%%%%%%%%%%%%%%%%%%%%%%%%%%
\begin{defi}\label{defi_Tak_Tak_inv}
Let $\widehat{\mathcal R}$ be the equivalence class of $\mathcal R$ for $\equiv$.
It is an invariant for the MASA $\AM$ that we call the \textit{Takesaki invariant}.
In particular it does not depend of the choice of the $C^*$-algebra $B$, see \cite{takesaki_invariant_masa}.
We say that a MASA is \textit{Takesaki simple} if $\RT\equiv\Delta Y$, where $\Delta Y=\{(t,t),\,t\in Y\}$ is the diagonal of $Y$.
\end{defi}
%%%%%%%%%%%%%%%%%%%%%%%%%%%%%%%%%%%%%%%%%%%%%%%%%%%%%%%%%%%%%%%%%%%%%%%%%%%%%%%%%%%%%%%%%%%%%%%%%%%%%%%%%%%
Let us define an other \eqre. 
Consider the normaliser $\NMA$ and a countable subgroup $G<\NMA$ such that the bicommutant $\{G\cup A\}''$ is equal to $\NMA''$ inside $M$.
The group $G$ acts on $A$, hence this gives an action on the space $(Y,\nu)$.  
We denote by $\Ncal_G$ the orbit equivalence relation.
This \eqre\ does not depend of the choice of the group $G$ (see proposition \ref{prop_Ncal}), therefore we simply denote it by $\Ncal$. 

Takesaki proved in \cite[theorem 1.2]{takesaki_invariant_masa} that $\Ncal\prec \RT$. He asked if $\RT$ is a countable, quasi-invariant \eqre\ and  if  $\RT\equiv \Ncal$.
We recall that an \eqre\ is quasi-invariant if the saturation of a null set is still a null set.
In the mid 70's, Hahn developed a theory of measure groupoids \cite{Hahn_Haar_meas_I}, \cite{Hahn_Haar_meas_II}.
Using this theory, he proved in \cite{Hahn_reconstruction} that $\RT\equiv\Ncal$.
%%%%%%%%%%%%%%%%%%%%%%%%%%%%%%%%%%%%%%%%%%%%%%%%%%%%%%%%%%%%%%%%%%%%%%%%%%%%%%%%%%%%%%%%%%%%%%%%%%%%%%%%%%%

The author wants to indicate that all the results presented in this paper have been proved without knowing the work of Hahn.
We give here an elementary proof of the equivalence $\RT\equiv\Ncal$.
Furthermore, we define a third equivalence relation $\WR$  that we call the \textit{bimodule equivalence relation} and prove that $\RT\equiv\WR$.
In particular we show the surprising fact that the Takesaki \eqre\ can be reconstructed from the $A$-bimodule structure of $\LM$.

The \bieqre\ is defined in a similar way as $\RT$ except that we replace the separable weakly dense $C^*$-subalgebra $B\subset M$ by a separable weakly dense $C^*$-subalgebra $D\subset A$ (see definition \ref{defi_Tak_weak_Tak_eqre}).

One of the key arguments to prove this theorem is to show that the Takesaki \eqre\ is quasi-invariant.
We introduce a subset $\Y\subset Y\times Y$ that we call \textit{the set of atoms} (see definition \ref{defi_set_of_atoms}). 
This set has been studied in \cite{Popa_Shlya_Cartan_sub} and in \cite{Mukherjee_masa_decompo_integration}.
We identify $\Y$ as a symmetric relation on $Y$ and prove that it is quasi-invariant.
Furthermore, we show that $\WR$ is weakly contained in $\Y$.
Therefore, $\RT$ and $\WR$ are quasi-invariant.
We also prove that $\Y$ is equivalent to $\RT$ under the relation "$\equiv$", hence the main theorem of this paper is the following:
%%%%%%%%%%%%%%%%%%%%%%%%%%%%%%%%%%%%%%%%%%%%%%%%%%%%%%%%%%%%%%%%%%%%%%%%%%%%%%%%%%%%%%%%%%%%%%%%%%%%%%%%%%%
\begin{theo*}
Let $\AM$ be a MASA in a finite \vna, then the Takesaki equivalence relation, the \bieqre, the set of atoms and the \eqre\ induced by the normalizer are equal up to a null set, 
i.e. 
$$\RT\equiv\WR\equiv\Y\equiv\Ncal.$$
\end{theo*}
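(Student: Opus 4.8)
The plan is to close the cycle of weak containments
\[
\Ncal \prec \RT \prec \WR \prec \Y \prec \Ncal ,
\]
after which Definition~\ref{defi_approx} gives $\RT\equiv\WR\equiv\Y\equiv\Ncal$. Only the first inclusion is already known: it is \cite[Theorem~1.2]{takesaki_invariant_masa}. For the rest, view $\LM$ as an $A$-$A$ bimodule through the restrictions to $A$ of the commuting actions $\pi,\rho$, and keep the disintegration $\LM=\int^\oplus_Y\mathcal K_t\,d\nu(t)$ over $\rho(A)$ from the introduction, so that $\pi(A)$ acts fibrewise on $\mathcal K_t$. Write $\mu_t$ for the maximal spectral type of $\pi(A)|_{\mathcal K_t}$ and $m(s,t)$ for its multiplicity; then $\Y$ (Definition~\ref{defi_set_of_atoms}) is identified with the symmetric relation $\{(s,t):\mu_t(\{s\})>0\}$, with atom space $\hst$ the $\{s\}$-spectral subspace of $\mathcal K_t$ and $\xist$ a distinguished vector therein. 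The two middle inclusions are soft. For $\RT\prec\WR$, choose the weakly dense separable $C^*$-subalgebra $B\subset M$ to contain a weakly dense separable $C^*$-subalgebra $D\subset A$; then a unitary equivalence $\pi_s\simeq\pi_t$ of fibre $B$-representations restricts to an equivalence of fibre $D$-representations, and neither $\RT$ nor $\WR$ depends on the choice. For $\WR\prec\Y$, note first that the diagonal lies in $\Y$ off a null set (the component $\Omega_t$ of $\Omega$ in $\mathcal K_t$ is a nonzero $\{t\}$-eigenvector of $\pi(A)|_{\mathcal K_t}$ for $\nu$-a.e.\ $t$); then $(s,t)\in\WR$ forces $\mu_s\sim\mu_t$, and since equivalent measures have the same atoms, $(s,t)\in\Y$.

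The crucial auxiliary step is that $\Y$, as a relation on $(Y,\nu)$, is quasi-invariant. Two facts drive this. First, $\Y$ is a \emph{countable} relation: a finite measure has at most countably many atoms, so $\Y\cap(Y\times\{t\})$ is countable, and Lusin--Novikov uniformization writes $\Y$, off a null set, as a countable union $\bigcup_n\mathrm{graph}(\phi_n)$ of graphs of injective Borel partial maps $\phi_n\colon D_n\to Y$. Second, the measure $\mu:=\int^\oplus_Y\mu_t^{\mathrm{at}}\,d\nu(t)$ on $Y^2$ ($\mu_t^{\mathrm{at}}$ the atomic part of $\mu_t$), which is carried by $\Y$ and has both marginals equivalent to $\nu$, is invariant under the coordinate flip up to equivalence: the conjugate-linear isometry $J$ with $Jx\Omega=x^*\Omega$ exchanges $\pi$ and $\rho$, hence identifies $\LM$ with its coordinate-flipped conjugate $A$-$A$ bimodule, which yields the reciprocity $\mu_t(\{s\})\,d\nu(t)\equiv\mu_s(\{t\})\,d\nu(s)$ on the atomic parts. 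Combining the two: on $\mathrm{graph}(\phi_n)$ the push-forward of $\mu$ to $D_n$ has density $t\mapsto\mu_t(\{\phi_n(t)\})$, which is positive throughout $D_n$ and so equivalent to $\nu|_{D_n}$, while its push-forward to $\phi_n(D_n)$ equals $(\phi_n)_*(\nu|_{D_n})$ and is $\ll\nu$; thus each $\phi_n$, and by the flip-symmetry of $\Y$ each $\phi_n^{-1}$, is $\nu$-nonsingular. Hence the $\Y$-saturation $\bigcup_n\phi_n^{\pm1}(N)$ of any null set $N$ is null, so $\Y$ (and therefore $\WR$ and $\RT$) is quasi-invariant.

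It remains to prove $\Y\prec\Ncal$, and this is where quasi-invariance is used and where the real work lies. The plan is to show that the atomic subbimodule $\int^\oplus_Y(\mathcal K_t)^{\mathrm{at}}\,d\nu(t)$ is the closed linear span of $\{v\Omega:\ v\in M,\ v^*v,vv^*\in A\}$: for each graph piece one assembles the fibrewise eigenvectors $\xi_{\phi_n(t),t}$ $(t\in D_n)$ into a single operator, and the nonsingularity of $\phi_n$ — concretely, the existence of the Radon--Nikodym cocycle $d(\nu\circ\phi_n)/d\nu$ controlling the change of $L^2$-norm between fibres — is exactly what turns this operator into a scalar multiple of an element $v_n\in M$ with $v_n^*v_n=1_{D_n}$, $v_nv_n^*=1_{\phi_n(D_n)}$ and $v_n a v_n^*=(a\circ\phi_n^{-1})v_nv_n^*$ for $a\in A$. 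This gives $\Y\equiv\bigcup_n\mathrm{graph}(\phi_{v_n})$. Finally one identifies the relation generated by such partial isometries with $\Ncal$: using the finiteness of $M$, each $\phi_{v_n}$ is covered, off a null set, by transformations induced by genuine unitary normalizers, so $\bigcup_n\mathrm{graph}(\phi_{v_n})\prec\Ncal$; this identification, together with the independence of $\Ncal$ from the generating subgroup, is Proposition~\ref{prop_Ncal}.

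I expect the main obstacle to be precisely this last reconstruction. Everything upstream — the three inclusions $\Ncal\prec\RT\prec\WR\prec\Y$, the countability of $\Y$, and the flip-symmetry of the atomic measure — is comparatively routine, but manufacturing a bona fide partial isometry of $M$ out of a measurable field of fibre eigenvectors requires tight control of $L^2$-norms across fibres, available only because $\Y$ is quasi-invariant; hence quasi-invariance must be established first and then fed back into the argument. A secondary delicate point is the passage, up to null sets, from partial isometries whose support projections lie in $A$ to restrictions of elements of $\NMA$, where finiteness of $M$ is essential and where one must take care not to conflate the groupoid normalizer with the normalizer itself.
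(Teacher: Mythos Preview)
Your proposal is correct and follows the same cycle $\Ncal\prec\RT\subset\WR\prec\Y\prec\Ncal$ as the paper, with the same identification of the two substantive steps (quasi-invariance of $\Y$ via the flip, then manufacturing normalizers from graph pieces of $\Y$). The implementation differs in two places that are worth noting. For $\WR\prec\Y$ the paper does not pass through the spectral-type equivalence $\mu_s\sim\mu_t$ as you do; it fixes one injective $f\in\CY$, shows $f(t)$ is an eigenvalue of $\pi_t(f)$ a.e., and deduces directly that an eigenvector for $f(s_0)$ forces $s_0$ to be an atom of $\mu_t$. For $\Y\prec\Ncal$ the paper bypasses exactly the difficulty you flag: rather than assembling fibrewise eigenvectors and controlling norms with a Radon--Nikodym cocycle, it writes $\Y$ (via Kechris~18.10) as a countable union of graphs of \emph{total} Borel automorphisms $h\in\IY$, takes the characteristic function $\chi_{\Gamma_h}\in L^\infty(Y^2,\mu)\subset\LM$, and reads the required unitary of $\NMA$ off the polar decomposition of $\chi_{\Gamma_h}$ as an operator affiliated with $M$. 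This produces a genuine unitary in one stroke, so the upgrade from partial isometries to unitary normalizers that you anticipate as a secondary obstacle never arises.
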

%%%%%%%%%%%%%%%%%%%%%%%%%%%%%%%%%%%%%%%%%%%%%%%%%%%%%%%%%%%%%%%%%%%%%%%%%%%%%%%%%%%%%%%%%%%%%%%%%%%%%%%%%%%
In particular a MASA is singular (its normalizer is equal to the unitaries of $A$) \ifff\ it is Takesaki simple.
We illustrate this result with a proposition on inclusions of countable groups, see proposition \ref{prop_groups}.
Furthermore, we deduce a result on the normalizer of tensor product of MASAs.
%%%%%%%%%%%%%%%%%%%%%%%%%%%%%%%%%%%%%%%%%%%%%%%%%%%%%%%%%%%%%%%%%%%%%%%%%%%%%%%%%%%%%%%%%%%%%%%%%%%%%%%%%%%
\begin{cor*}
Consider a family of MASAs in some \vna s $\{A_l\subset M_l,\,l\in\Lambda\}$, where $\Lambda$ is a countable set.
For any $l\in \Lambda$, we consider a faithful trace $\tau_l$ on $M_l$.
Let $A=\bigotimes_l A_l$ and $M=\bigotimes_l M_l$ be the tensor products of those \vna s \wrt\ the traces $\tau_l$.
The \vna\, generated by the normalizer $\NMA$ is equal to the tensor product of the \vna s generated by the normalizers $N_{M_l}(A_l)$, i.e.
$$\NMA''=\bigotimes_{l\in\Lambda}N_{M_l}(A_l)''.$$
\end{cor*}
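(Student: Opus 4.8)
The plan is to use the main theorem to trade the normalizer — which is hard to control directly inside a tensor product — for the set of atoms $\Y$, which is multiplicative under tensor products, and then to recover the algebra from its equivalence relation.

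The inclusion $\bigotimes_{l\in\Lambda}\NMAl''\subseteq\NMA''$ is elementary: for $u\in\NMAl$ the unitary $1\otimes\cdots\otimes u\otimes\cdots\otimes 1$ of $M$ normalizes $A=\bigotimes_l A_l$ (it fixes every elementary tensor supported off the $l$-th leg and sends $1\otimes\cdots\otimes a\otimes\cdots\otimes 1$, $a\in A_l$, to $1\otimes\cdots\otimes uau^{*}\otimes\cdots\otimes 1\in A$), and these unitaries generate $Q_1:=\bigotimes_{l\in\Lambda}\NMAl''$. Setting $Q_2:=\NMA''$, it remains to prove $Q_2\subseteq Q_1$. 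I would first record that $A$ is a MASA in every \vna\ $P$ with $A\subseteq P\subseteq M$, in particular in $Q_1$ and $Q_2$; that $A$ is regular in $Q_2$ (since $\NMA\subseteq N_{Q_2}(A)$ generates $Q_2$) and in $Q_1$ (since the unitaries above lie in $N_{Q_1}(A)$ and generate $Q_1$); and that $\tau$ restricts to a trace-preserving conditional expectation onto $A$ on each, so that $\AM$, $A\subset Q_1$ and $A\subset Q_2$ are all inclusions to which the main theorem applies.

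Next I would prove $\Ncal(A\subset Q_1)\equiv\Ncal(A\subset Q_2)$. Two facts enter. First, the normalizer equivalence relation is unchanged when a \vna\ $N\supseteq A$ is replaced by $N_N(A)''$, because the two have the same normalizer of $A$; hence $\Ncal(A\subset Q_2)\equiv\Ncal(\AM)$ and $\Ncal(A_l\subset\NMAl'')\equiv\Ncal(\AMl)$. Second, $\Y$ is multiplicative under tensor products: since $\LM=\bigotimes_l\LMl$ as an $A$-$A$ bimodule, where $A=\bigotimes_l A_l$ (using the trace vectors as stabilizing sequence when $\Lambda$ is infinite), the disintegration of $\LM$ over $Y^{2}=\prod_l Y_l^{2}$ is the tensor product of the disintegrations of the $\LMl$ and the associated measure on $Y^{2}$ is the product of those on the $Y_l^{2}$; as a point of a product measure space is a fibred atom precisely when each coordinate is one, a routine direct-integral computation gives $\Y\equiv\prod_l\Y_l$, and likewise the set of atoms of $A\subset Q_1$ equals $\prod_l$ of the sets of atoms of $A_l\subset\NMAl''$ (see also \cite{Mukherjee_masa_decompo_integration}, \cite{Popa_Shlya_Cartan_sub}). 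Applying the main theorem to $\AM$, to $A\subset Q_1$, to each $\AMl$ and to each $A_l\subset\NMAl''$, every $\Ncal$ above may be replaced by the corresponding $\Y$; chaining the identifications yields
\[
\Ncal(A\subset Q_2)\equiv\Ncal(\AM)\equiv\prod_l\Ncal(\AMl)\equiv\Ncal(A\subset Q_1).
\]

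Finally I would reconstruct $Q_2$ from its equivalence relation. Fix a countable subgroup $G_1=\{g_n\}<N_{Q_1}(A)$ with $\{G_1\cup A\}''=Q_1$, so that $\Ncal(A\subset Q_1)$ is the union of the graphs of the associated transformations $\phi_{g_n}$ of $(Y,\nu)$. Given $u\in N_{Q_2}(A)$ inducing a transformation $\phi$, its graph lies in $\Ncal(A\subset Q_2)\equiv\Ncal(A\subset Q_1)$; disjointifying the sets $\{t:\phi(t)=\phi_{g_n}(t)\}$, whose union is conull, into a measurable partition $\{F_n\}$ of $Y$ and setting $u':=\sum_n g_n\,1_{F_n}\in Q_1$, one checks that $u'$ is a unitary in $N_{Q_1}(A)$ inducing the same $\phi$. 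Then $u'^{*}u$ commutes with $A$, so $u'^{*}u\in A\subseteq Q_1$, whence $u=u'(u'^{*}u)\in Q_1$. Thus $N_{Q_2}(A)\subseteq Q_1$, and by regularity $Q_2=N_{Q_2}(A)''\subseteq Q_1$, which finishes the proof. I expect the main obstacle to be the multiplicativity $\Y\equiv\prod_l\Y_l$: identifying $\LM$ with the (possibly infinite) tensor product of the $\LMl$ as $A$-bimodules, and then tracking the direct-integral decomposition over $Y^{2}$, the associated measure and its fibred atoms through this identification, requires some care with measurable fields and, when $\Lambda$ is infinite, with the stabilizing sequence of trace vectors; everything else is soft.
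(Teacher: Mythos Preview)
Your strategy is the paper's: compute the set of atoms of the tensor product, translate via the main theorem into $\Ncal$, and then use the correspondence between $\Ncal$ and the intermediate algebra generated by the normalizer. Your final ``reconstruction'' paragraph is exactly the implication $\Ncal_H\prec\Ncal_K\Rightarrow\{H\cup A\}''\subset\{K\cup A\}''$ already proved in Proposition~\ref{prop_Ncal}, so you could simply cite that. The paper is also a bit leaner: it never applies the main theorem to $A\subset Q_1$, it only needs $\Y(A\subset M)$ and $\Y_l(A_l\subset M_l)$, and then invokes Proposition~\ref{prop_Ncal} once for the explicit group $G=\langle \otimes_l u_l:\ u_l\in G_l,\ u_l=1\text{ a.e.}\rangle$.

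There is, however, a genuine slip in your multiplicativity formula. For infinite $\Lambda$ the measure on $Y^2$ underlying the $A$-bimodule $\LM$ is \emph{not} the product $\bigotimes_l\mu_l$: the stabilizing vectors $\Omega^l\in L^2(A_l)$ are supported on the diagonals $\Delta Y_l$, so the correct measure class is the one generated by the $\mu_E=\prod_{l\in E}\mu_l\times\prod_{l\notin E}\mu_l\!\restriction_{\Delta Y_l}$ for finite $E\subset\Lambda$ (this is Theorem~\ref{theo_Tak_tensor}). Consequently $\Y$ is not $\prod_l\Y_l$ but the \emph{restricted} product
\[
\Y\ \equiv\ \bigl\{(\underline s,\underline t)\in Y^2:\ (s_l,t_l)\in\Y_l\ \text{for all }l,\ s_l=t_l\text{ for all but finitely many }l\bigr\}.
\]
For instance, with each $A_l\subset M_l$ the diagonal MASA in $M_2(\C)$ one has $\Y_l=Y_l^2$, so your formula would give $\Y\equiv Y^2$, whereas the true $\Y$ is the countable tail-equivalence relation on $\{0,1\}^\Lambda$. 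Your argument survives this correction, because the same restricted-product description holds for $\Y(A\subset Q_1)$ in terms of the $\Y(A_l\subset\NMAl'')$, and factor by factor $\Y_l\equiv\Y(A_l\subset\NMAl'')$ via the main theorem; but the sentence ``the associated measure on $Y^{2}$ is the product of those on the $Y_l^{2}$'' and the conclusion $\Y\equiv\prod_l\Y_l$ must be replaced by the restricted versions above.
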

%%%%%%%%%%%%%%%%%%%%%%%%%%%%%%%%%%%%%%%%%%%%%%%%%%%%%%%%%%%%%%%%%%%%%%%%%%%%%%%%%%%%%%%%%%%%%%%%%%%%%%%%%%%
This result has been proved by Chifan \cite{chifan_normalizer_masa} using analytic technics.

The rest of the paper is organized into 3 sections. 
In the first one we fix some notations and review some basic facts about \eqre s.
We define the \eqre\ induced by the action of the normalizer on the algebra $A$, the \bieqre\ and the set of atoms. 
In the second section, we prove the main result of this paper.
In the third one we illustrate the main result in the context of representations of discrete countable groups. 
Then we prove the corollary on tensor product of MASAs.
%%%%%%%%%%%%%%%%%%%%%%%%%%%%%%%%%%%%%%%%%%%%%%%%%%%%%%%%%%%%%%%%%%%%%%%%%%%%%%%%%%%%%%%%%%%%%%%%%%%%%%%%%%%%

%%%%%%%%%%%%%%%%%%%%%%%%%%%%%%%%%%%%%%%%%%%%%%%%%%%%%%%%%%%%%%%%%%%%%%%%%%%%%%%%%%%%%%%%%%%%%%%%%%%%%%%%%%%%
\section{Notations and definitions}\label{section_notation}
%%%%%%%%%%%%%%%%%%%%%%%%%%%%%%%%%%%%%%%%%%%%%%%%%%%%%%%%%%%%%%%%%%%%%%%%%%%%%%%%%%%%%%%%%%%%%%%%%%%%%%%%%%%%

%%%%%%%%%%%%%%%%%%%%%%%%%%%%%%%%%%%%%%%%%%%%%%%%%%%%%%%%%%%%%%%%%%%%%%%%%%%%%%%%%%%%%%%%%%%%%%%%%%%%%%%%%%%%
In this section, we fix some notations and define the \bieqre\ $\WR$, the \eqre\ $\Ncal$ induced by the action of the normalizer and the set of atoms $\Y$.
Consider the $C^*$-algebra of continuous functions $\mathcal C(Y)$. 
Let $B\subset M$ be a $C^*$-subalgebra which is separable and weakly dense.
The equivalence class of $\RT$ does not depend on $B$. 
Therefore we can assume that $B$ contains the $C^*$-algebra $\CY$.
Hence we have the following square of inclusions:
$${\begin{array}{ccc}
  \Li & \subset & M \\
  \cup &  & \cup \\
  \mathcal C(Y) & \subset & B
\end{array}}.$$
%%%%%%%%%%%%%%%%%%%%%%%%%%%%%%%%%%%%%%%%%%%%%%%%%%%%%%%%%%%%%%%%%%%%%%%%%%%%%%%%%%%%%%%%%%%%%%%%%%%%%%%%%%%%
\begin{defi}\label{defi_Tak_weak_Tak_eqre}
Consider the \bieqre\ $\WR$ which is defined such that $(s,t)\in \WR$ if the representation $\pi_t\vert_{\mathcal C(Y)}$ is unitarily equivalent to $\pi_s\vert_{\mathcal C(Y)}$.
\end{defi}
%%%%%%%%%%%%%%%%%%%%%%%%%%%%%%%%%%%%%%%%%%%%%%%%%%%%%%%%%%%%%%%%%%%%%%%%%%%%%%%%%%%%%%%%%%%%%%%%%%%%%%%%%%%%
Let $\IY$ be the group of Borel automorphisms of $(Y,\nu)$ that preserve the class of the measure $\nu$. 
For any countable subgroup $H<\NMA$ we can define a group homomorphism $\Theta^H:H\longrightarrow \IY$ such that for any $u\in H$ and any $f\in A$, $u^*fu=f\circ \Theta_u^H$ $\nu$-almost everywhere ($\nu$-a.e.).
Consider the \oreqre\ 
$$\Ncal_H=\{(\Theta^H_u(t),t),\ t\in Y,\ u\in H\}.$$
If $u\in\NMA$ we denote by $\Theta_u$ a given automorphism such that $u^*fu=f\circ \Theta_u$ $\nu$-a.e..
The following proposition justifies the definition of $\Ncal$ given in the introduction:
%%%%%%%%%%%%%%%%%%%%%%%%%%%%%%%%%%%%%%%%%%%%%%%%%%%%%%%%%%%%%%%%%%%%%%%%%%%%%%%%%%%%%%%%%%%%%%%%%%%%%%%%%%%%
\begin{prop}\label{prop_Ncal}
There exists a countable subgroup $G<\NMA$ such that $G''=\NMA''\subset M$.
We denote by $\Ncal$ the orbit \eqre\ $\Ncal_G$.
If $H<\NMA$ is a countable subgroup then $\Ncal_H\prec\Ncal$.
Furthermore, $\Ncal_H\equiv\Ncal$ \ifff\ 
$$\{H\cup A\}''=\NMA''.$$
\end{prop}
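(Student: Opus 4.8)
The plan is to establish the three assertions in order. For the existence of a countable subgroup $G < \NMA$ with $G'' = \NMA''$: since $M$ has separable predual, the von Neumann algebra $\NMA''$ is countably generated in the strong topology. First I would pick a sequence of unitaries $(u_n)$ in $\NMA$ that is strongly dense in the unitary group of $\NMA''$ — this is possible because the unitary group of a von Neumann algebra with separable predual is strongly separable, and $\NMA$ generates $\NMA''$, so finite products of the $u_n$ and their adjoints are strongly dense. Let $G$ be the subgroup of $\NMA$ generated by $\{u_n\}$; it is countable, and $G'' \supseteq \{u_n\}'' = \NMA''$, while $G \subset \NMA$ forces $G'' \subseteq \NMA''$. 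So $G'' = \NMA''$, and we set $\Ncal := \Ncal_G$.

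For $\Ncal_H \prec \Ncal$ when $H < \NMA$ is countable: the point is that each $u \in H \subset \NMA$ acts on $A$, hence on $(Y,\nu)$, via some $\Theta_u \in \IY$, and the orbit relation only depends on the action on $A$ up to null sets. Concretely, fix $u \in H$. Both $\Theta^H_u$ and the automorphism $\Theta_u$ used to build $\Ncal_G$ (taking $u$ as an element of $\NMA$) satisfy $u^* f u = f \circ \Theta^H_u = f \circ \Theta_u$ $\nu$-a.e.\ for every $f \in A$; applying this to a countable weakly dense subset of $A$ shows $\Theta^H_u = \Theta_u$ $\nu$-a.e. Now for a fixed enumeration $G = \{g_k\}$, one gets a single null set $N$ outside of which all the relevant a.e.\ identities hold simultaneously (a countable union of null sets). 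Since $G'' = \NMA'' \supseteq H$ (note $H \subset \NMA$ and also $H$ need not be contained in $G$, but $H \subset \NMA''$ as unitaries), I need to express the $\Theta^H_u$-orbits in terms of the $\Theta_{g_k}$-orbits. Here the subtlety is that $u \in H$ lies in $\NMA''$ but perhaps not in the group generated by $G$ in $M$; however, the \emph{action on $A$} of any unitary of $\NMA''$ is approximated in the strong topology by the actions of elements of $G$, and since $A$ is abelian and the action is by measure-class-preserving transformations, strong approximation of $u^* f u$ by $g_k^* f g_k$ forces the orbit of $t$ under $u$ to lie in the closure of the $G$-orbit; working at the level of the Borel/measure-theoretic orbit equivalence relation, one concludes $\Ncal_H \setminus N^2 \subset \Ncal_G$, i.e.\ $\Ncal_H \prec \Ncal$.

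For the final equivalence: if $\{H \cup A\}'' = \NMA''$, then running the argument of the first part with $H$ in place of ``$\NMA$'' (and noting that countably many elements of $H$ together with $A$ already generate $\NMA''$ because $A$ is already a subalgebra) produces a countable $G' < H$ with $G'' = \{G' \cup A\}'' = \{H \cup A\}'' = \NMA''$; applying the already-proved statements to $G'$ gives $\Ncal = \Ncal_{G'} \prec \Ncal_H \prec \Ncal$, hence $\Ncal_H \equiv \Ncal$. Conversely, suppose $\Ncal_H \equiv \Ncal$. The group von Neumann algebra $\{H \cup A\}''$ contains $A$ as a MASA and is the crossed-product-type algebra associated to the (measured, countable, measure-class-preserving) equivalence relation $\Ncal_H$ twisted by a $2$-cocycle; the key structural fact is that this algebra, as an $A$-bimodule, is determined by the orbit relation $\Ncal_H$ up to equivalence. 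More precisely, the $A$-central decomposition of $L^2(\{H\cup A\}'')$ is supported exactly on $\widehat{\Ncal_H}$, and likewise $L^2(\NMA'')$ is supported on $\widehat{\Ncal_G} = \widehat{\Ncal}$; if $\Ncal_H \equiv \Ncal$ these supports coincide, and since both $\{H\cup A\}''$ and $\NMA''$ are spanned (in $L^2$) by the normalizing unitaries over $A$ that implement the respective orbit relations, one gets $\{H \cup A\}'' = \NMA''$. I expect this converse direction — precisely, going from equality of orbit relations up to null sets back to equality of the generated von Neumann algebras — to be the main obstacle, because it requires a clean correspondence between the measured equivalence relation and the $A$-bimodule $L^2$ of the generated algebra, keeping track of the fact that a normalizing unitary decomposes over its support in $Y \times Y$ and that weak density of finitely many such unitaries' supports gives weak density of the algebra.
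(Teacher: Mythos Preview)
Your argument for $\Ncal_H \prec \Ncal$ has a genuine gap. You argue that since $u \in H \subset \NMA''$ and $G$ is strongly dense in (the unitaries of) $\NMA''$, strong approximation of $u^*fu$ by $g_k^* f g_k$ should force the $\Theta_u$-orbit of $t$ to lie in ``the closure of the $G$-orbit''. But strong convergence of operators does \emph{not} give pointwise a.e.\ convergence of the associated automorphisms of $(Y,\nu)$, and in a measure-theoretic context there is no meaningful ``closure of an orbit'' to appeal to. This step does not go through as written, and there is no obvious way to repair it along these lines. Likewise, for the converse direction of the equivalence you invoke Feldman--Moore crossed-product structure and $A$-bimodule supports; this is much heavier than necessary and, as you yourself note, you do not actually complete it.

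The paper avoids both difficulties by proving a single elementary biconditional: for any two countable subgroups $H,K<\NMA$,
\[
\{H\cup A\}''\subset\{K\cup A\}'' \quad\Longleftrightarrow\quad \Ncal_H\prec\Ncal_K,
\]
from which all three assertions follow at once (take $K=G$ and use $\{H\cup A\}''\subset\NMA''=\{G\cup A\}''$). For ($\Leftarrow$): given $u\in H$, enumerate $K=\{v_k\}$, set $F_k=\{t:\Theta_{v_k}(t)=\Theta_u(t)\}\setminus\bigcup_{j<k}F_j$, and form $v=\sum_k v_k\chi_{F_k}\in\{K\cup A\}''$; the hypothesis $\Ncal_H\prec\Ncal_K$ makes the $F_k$ cover $Y$ up to a null set, so $v$ is unitary with $\Theta_v=\Theta_u$ a.e., whence $vu^*\in U(A)$ and $u\in\{K\cup A\}''$. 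For ($\Rightarrow$): given $u\in H$, let $E=\{t:(\Theta_u(t),t)\notin\Ncal_K\}$ and apply Lemma~\ref{lem_Tak_unit_ortho} to show $u\chi_E\perp w$ for every $w\in K\cup U(A)$; hence $u\chi_E\perp\{K\cup A\}''\ni u$, forcing $\chi_E=0$. Both directions are short and completely elementary---no approximation arguments, no structure theory.
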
 
%%%%%%%%%%%%%%%%%%%%%%%%%%%%%%%%%%%%%%%%%%%%%%%%%%%%%%%%%%%%%%%%%%%%%%%%%%%%%%%%%%%%%%%%%%%%%%%%%%%%%%%%%%%%
Before proving this proposition we recall a useful lemma:
\begin{lemm}\label{lem_Tak_unit_ortho}
Consider a unitary $u\in\NMA$. 
Suppose that there exists a Borel subset $E\subset Y$ such that for any $t\in E$, $\Theta_u(t)\neq t$.
Then, $\tau(u\chi_E)=0$, where $\chi_E$ is the characteristic function of the set $E$.
\end{lemm}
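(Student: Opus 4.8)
The plan is to reduce everything to showing that the ``Fourier coefficient'' $E_A(u)$ vanishes $\nu$-almost everywhere on $E$, where $E_A\colon M\to A$ denotes the unique $\tau$-preserving normal conditional expectation (which exists since $M$ is finite). Once this is known, $A$-bimodularity of $E_A$ together with $\tau=\tau\circ E_A$ gives at once
$$\tau(u\chi_E)=\tau\bigl(E_A(u\chi_E)\bigr)=\tau\bigl(E_A(u)\chi_E\bigr)=\int_E E_A(u)\,d\nu=0,$$
identifying $A$ with $\Li$ and $\tau|_A$ with integration against $\nu$. So the whole content is the location of the support of $E_A(u)$.

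To that end I would use only the defining relation $u^*fu=f\circ\Theta_u$ (equivalently $ufu^*=f\circ\Theta_u^{-1}$, for a chosen inverse automorphism). For $f\in A$ one has $uf=(ufu^*)u=(f\circ\Theta_u^{-1})\,u$, so applying $E_A$ and using its $A$-bimodularity yields the identity $E_A(u)\,f=(f\circ\Theta_u^{-1})\,E_A(u)$ in $A$; read pointwise this says $E_A(u)(t)\bigl(f(t)-f(\Theta_u^{-1}(t))\bigr)=0$ for $\nu$-a.e.\ $t$. Since $\CY$ sits inside the separable $C^*$-algebra $B$, the space $Y$ is metrizable, so one may fix a countable subset $\{f_n\}\subset\CY$ that separates the points of $Y$ and intersect the corresponding conull sets: for $\nu$-a.e.\ $t$ one has $E_A(u)(t)\bigl(f_n(t)-f_n(\Theta_u^{-1}(t))\bigr)=0$ for every $n$. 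If in addition $\Theta_u^{-1}(t)\neq t$, point separation gives $f_n(t)\neq f_n(\Theta_u^{-1}(t))$ for some $n$, forcing $E_A(u)(t)=0$. Hence $E_A(u)$ is supported, up to a $\nu$-null set, on $\{t\in Y:\Theta_u(t)=t\}$, which by hypothesis is disjoint from $E$; thus $E_A(u)\chi_E=0$ and the display above concludes.

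I do not expect a genuine obstacle here: the argument is a careful bookkeeping of the conditional expectation against the relation $u^*fu=f\circ\Theta_u$, the only extra ingredient being a countable point-separating family in $A$, which is immediate from metrizability of $Y$. For a proof avoiding conditional expectations altogether, the same separation argument lets one write $E=\bigsqcup_n E_n$ with each $E_n$ Borel and $\Theta_u(E_n)\cap E_n=\emptyset$; then $u\chi_{E_n}$ is a partial isometry with mutually orthogonal initial projection $\chi_{E_n}$ and final projection $u\chi_{E_n}u^*=\chi_{\Theta_u(E_n)}$, so $\tau(u\chi_{E_n})=\tau\bigl(\chi_{\Theta_u(E_n)}(u\chi_{E_n})\chi_{E_n}\bigr)=\tau\bigl(\chi_{E_n}\chi_{\Theta_u(E_n)}\,u\chi_{E_n}\bigr)=0$, and summing over $n$ gives $\tau(u\chi_E)=0$.
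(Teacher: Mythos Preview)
Your main argument is correct and essentially the same as the paper's: both show $E_A(u\chi_E)=0$ by exploiting the commutation $fE_A(u\chi_E)=(f\circ\Theta_u)E_A(u\chi_E)$ together with functions in $A$ that separate points, the only cosmetic difference being that the paper uses a single injective $f\in\Li$ where you use a countable separating family in $\CY$ and intersect conull sets. Your alternative trace-only argument (splitting $E$ into pieces with $\Theta_u(E_n)\cap E_n=\emptyset$) is also valid and is a genuinely different, slightly more elementary route.
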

%%%%%%%%%%%%%%%%%%%%%%%%%%%%%%%%%%%%%%%%%%%%%%%%%%%%%%%%%%%%%%%%%%%%%%%%%%%%%%%%%%%%%%%%%%%%%%%%%%%%%%%%%%%%
\begin{proof}
Let us show that $E_A(u\chi_E)=0$.
Let $f\in L^\infty(Y,\nu)$ be an injective function.
We have that
\begin{align*}
fE_A(u\chi_E)&=E_A(fu\chi_E)=E_A(u(u^*fu)\chi_E)=E_A(u(f\circ\Theta_u)\chi_E)\\
&=E_A(u\chi_E)(f\circ\Theta_u)=(f\circ\Theta_u)E_A(u\chi_E).
\end{align*}
We identify $E_A(u\chi_E)$ with a function of the algebra $L^\infty(Y,\nu)$.
We have that $(f-f\circ\Theta_u)(t)E_A(u\chi_E)(t)=0$ \alev.
The function $f$ is injective and $\Theta_u(t)\neq t$ for any  $t\in E$.
Therefore, $E_A(u\chi_E)(t)=0$ \alev, hence $E_A(u\chi_E)=0$.
This implies that $\tau(u\chi_E)=\tau\circ E_A(u\chi_E)=0$.
\end{proof}
%%%%%%%%%%%%%%%%%%%%%%%%%%%%%%%%%%%%%%%%%%%%%%%%%%%%%%%%%%%%%%%%%%%%%%%%%%%%%%%%%%%%%%%%%%%%%%%%%%%%%%%%%%%%
Let us prove the proposition:
\begin{proof}[Proof of the proposition \ref{prop_Ncal}]
Let $G<\NMA$ be a countable subgroup which is dense for the norm of $\LM$. This group satisfies that $G''=\NMA''$.
To prove the two other statements of the proposition it is sufficient to show that for any countable subgroups $H,K<\NMA$ we have that 
$$\{H\cup A\}''\subset \{K\cup A\}''$$
\ifff\ $\Ncal_H\prec\Ncal_K.$

Suppose that $\mathcal N_H\prec\mathcal N_K,$ let $u\in \{H\cup A\}''$ be a unitary.
Let us show that $u\in \{K\cup A\}''$.
Let  $\{v_k,\,k\geqslant 1\}$ be an enumeration of the countable group $K$.
Consider the sets
$$E_k=\{t\in Y,\,\Theta^K_{v_k}(t)=\Theta_u(t)\}$$
and
$$F_k=E_k\backslash \bigcup_{j<k}E_j.$$
We see immediately that the sets $F_k$ are measurable.
Let $p_k$ be the projection equal to the characteristic function of the set $F_k$. 
If $k\neq l$, then $v_kp_k\perp v_lp_l$.
Therefore, the sum
$$\sum_{k=1}^\infty v_kp_k.$$ 
converges in the \vna\, $\{K\cup A\}''$ to an element $v$.
By hypothesis, the graph of $\Theta_u$ is weakly contained in $\Ncal_K$.
This implies that $\Y\backslash \bigcup_k F_k$ is a null set.
Thus, $v=\sum_k v_kp_k$ is a unitary in $\{K\cup A\}''$ and by construction $\Theta_u=\Theta_v$ $\nu$-\alev.
Therefore, $vu^*$ is a unitary of $A$, thus $u\in \{K\cup A\}''$.

Suppose that $\{H\cup A\}''\subset\{K\cup A\}''.$
Consider a unitary $u\in H$ and the set
$$E=\{t\in Y,\,(\Theta^H_u(t),t)\notin \Ncal_K\}.$$
The set $E$ is measurable. 
Let $v=up$, where $p=\chi_E$.
Consider a unitary $w\in K\cup U(A)$, where $U(A)$ is the unitary group of $A$.
Let us show that $v$ is orthogonal to $w$, i.e. $\tau(w^*v)=0$.
We have that $\Theta_{w^*u}=\Theta_w^{-1}\circ\Theta^H_u$ a.e..
By assumption, for any $t\in E$, $\Theta^H_u(t)\neq\Theta_w(t)$ a.e., hence $\Theta_{w^*u}(t)\neq t$ a.e..
We can apply lemma \ref{lem_Tak_unit_ortho}; thus, $\tau(w^*v)=0$.
Therefore, the partial isometry $v$ is orthogonal to the \vna\ $\{K\cup A\}''$.
This implies that $p=0$ and so the graph of $\Theta^H_u$ is weakly contained in the \eqre\ $\Ncal_K$.
So, $\Ncal_H\prec\Ncal_K$.
\end{proof}
%%%%%%%%%%%%%%%%%%%%%%%%%%%%%%%%%%%%%%%%%%%%%%%%%%%%%%%%%%%%%%%%%%%%%%%%%%%%%%%%%%%%%%%%%%%%%%%%%%%%%%%%%%%%
We define \textit{the set of atoms}.
Let $\mathcal A=\{\pi(A),\rho(A)\}''\subset \mathbb B(\LM)$ be the abelian von Neumann subalgebra generated by the left and right actions of $A$ on $\LM$.
Consider the coordinate projection $p:Y^2\longrightarrow Y$, $p(s,t)=t$ and the flip $\theta:Y^2\longrightarrow Y^2$, $\theta(s,t)=(t,s)$.
Following the proof of \cite[Theorem 1]{Feldman_Moore_Equv_Relations_Cohomology_vna_II}, there exists a Borel probability measure $\mu$ on $Y^2$ such that the \vna\ $\mathcal A$ is isomorphic to $L^\infty(Y^2,\mu)$. 
It is easy to see that $\mu$ is quasi-invariant \wrt\ the flip, i.e. $\theta_*\mu\approx\mu$.
Furthermore, the push-forward measure $p_*\mu$ is in the equivalence class of the measure $\nu$, therefore by \cite[Chap. 6,\S 3]{bourbaki_integration} there exists a \textit{disintegration of $\mu$ \wrt\ $(p,\nu)$.}
It means that there exists a unique a.e. family $\{\mu_t,\,t\in Y\}$ of probability measures on $Y$, such that for any positive measurable function $f:Y^2\longrightarrow \R_+$, the map 
$$t\mapsto \int_Y f(s,t)d\mu_t(s)$$ 
is measurable, and
$$\mu(f)=\int_Y \int_Yf(s,t)d\mu_t(s)d\nu(t).$$
%%%%%%%%%%%%%%%%%%%%%%%%%%%%%%%%%%%%%%%%%%%%%%%%%%%%%%%%%%%%%%%%%%%%%%%%%%%%%%%%%%%%%%%%%%%%%%%%%%%%%%%%%%%%
\begin{defi}\label{defi_set_of_atoms}
The \textit{set of atoms} of the MASA $\AM$ is the set 
$$\Y=\{(s,t)\in Y^2,\,\mu_t(\{s\})>0,\,\mu_s(\{t\})>0 \}.$$
\end{defi}
%%%%%%%%%%%%%%%%%%%%%%%%%%%%%%%%%%%%%%%%%%%%%%%%%%%%%%%%%%%%%%%%%%%%%%%%%%%%%%%%%%%%%%%%%%%%%%%%%%%%%%%%%%%%
The set $\Y$ defines a symmetric relation on $Y$, hence we call orbit of $t$ the set of $s\in Y$ such that $(s,t)\in\Y$.
Note that $\Y$ is a measurable subset of $Y^2$, see \cite[Proposition 3.3]{Mukherjee_masa_decompo_integration} for a proof.
%%%%%%%%%%%%%%%%%%%%%%%%%%%%%%%%%%%%%%%%%%%%%%%%%%%%%%%%%%%%%%%%%%%%%%%%%%%%%%%%%%%%%%%%%%%%%%%%%%%%%%%%%%%%
\section{The main result}\label{section_main_result}
%%%%%%%%%%%%%%%%%%%%%%%%%%%%%%%%%%%%%%%%%%%%%%%%%%%%%%%%%%%%%%%%%%%%%%%%%%%%%%%%%%%%%%%%%%%%%%%%%%%%%%%%%%%%

%%%%%%%%%%%%%%%%%%%%%%%%%%%%%%%%%%%%%%%%%%%%%%%%%%%%%%%%%%%%%%%%%%%%%%%%%%%%%%%%%%%%%%%%%%%%%%%%%%%%%%%%%%%%
\begin{theo}\label{theo_Tak_main_result}
Consider the equivalence relations $\RT$, $\WR$, $\Ncal$ and the set of atoms $\Y$.
Then,
$$\RT\equiv\WR\equiv\Y\equiv\Ncal.$$
\end{theo}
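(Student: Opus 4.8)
The plan is to prove the chain of equivalences by establishing a cycle of weak containments. Since the introduction already recalls Takesaki's inequality $\Ncal\prec\RT$, and since $\RT\prec\WR$ is immediate from the definitions (the representations $\pi_s$ and $\pi_t$ restrict to $\mathcal C(Y)$, so unitary equivalence of $\pi_s$ and $\pi_t$ forces unitary equivalence of their restrictions --- wait, the implication goes the other way: $\WR$ is coarser, so $\RT\subset\WR$ as relations, hence $\RT\prec\WR$ trivially). So the real work is to close the loop by proving
$$\WR\prec\Y\prec\Ncal.$$
Together with $\Ncal\prec\RT\prec\WR$ this yields $\RT\equiv\WR\equiv\Y\equiv\Ncal$.

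\textbf{Step 1: $\WR\prec\Y$.} This is the heart of the argument and where I expect the main obstacle to lie. The idea is to analyze the disintegration $\LM=\intI\K_t\,d\nu(t)$ together with the second disintegration coming from $\mathcal A=\{\pi(A),\rho(A)\}''\cong\Lii$. Decomposing $\LM$ over the \emph{two-variable} space $(Y^2,\mu)$ and then reorganizing, one gets $\K_t=\int^\oplus_Y\h_{s,t}\,d\mu_t(s)$, where $\h_{s,t}$ carries the ``spectral mass at $s$'' of the right-$A$-action inside $\K_t$. The point is that $\pi_t\vert_{\mathcal C(Y)}$ acts on $\K_t$ as multiplication by the coordinate $s$ on this decomposition, so its unitary equivalence class is determined by the measure class of $\mu_t$ on $Y$ together with the multiplicity function $s\mapsto\dim\h_{s,t}$. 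If $(s,t)\in\WR$ but $(s,t)\notin\Y$ --- say $\mu_t(\{s\})=0$ --- then $\pi_t\vert_{\mathcal C(Y)}$ has no atom at the ``point'' corresponding to $s$, yet $\pi_s\vert_{\mathcal C(Y)}\simeq\pi_t\vert_{\mathcal C(Y)}$; a careful measure-theoretic bookkeeping (this is the technical crux) shows this can only happen on a $\mu$-null, hence after projecting a $\nu$-null-square, set. One should also use here that $\mu$ is quasi-invariant under the flip so that $\mu_t(\{s\})>0$ and $\mu_s(\{t\})>0$ are linked off a null set. I expect this step to require the measurable-selection and direct-integral machinery of \cite[Chap.~4, \S 8]{Takesaki_bouquin_I} and some care to make ``up to a null set'' precise; it is the step I would budget the most space for.

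\textbf{Step 2: $\Y\prec\Ncal$.} Here I would argue that an atom $(s,t)\in\Y$ produces a nonzero partial isometry in $\mathcal A'\cap\Bh$ intertwining the ``fibre at $s$'' and the ``fibre at $t$'', which (via the standard identification of the commutant of a MASA-bimodule with the normalizing groupoid, in the spirit of \cite{Feldman_Moore_Equv_Relations_Cohomology_vna_II} and \cite{Popa_Shlya_Cartan_sub}) is implemented by an element of $\NMA''$. Concretely: the presence of a common atom forces a nonzero element $v\in M$ with $E_A(v^*v)$ and $E_A(vv^*)$ supported near $t$ and $s$ respectively and with $v^*av=(a\circ\phi)\,v^*v$ for the partial Borel isomorphism $\phi$ sending a neighbourhood of $t$ to one of $s$; by polar decomposition and the usual gluing argument (exactly as in the proof of Proposition \ref{prop_Ncal}, summing $v_kp_k$ over a partition) one promotes a measurable family of such local pieces to a genuine normalizing unitary, so $(s,t)$ lies, up to a null set, in $\Ncal$. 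Lemma \ref{lem_Tak_unit_ortho} is the tool that guarantees the pieces that are \emph{not} captured by $\Ncal$ contribute zero, forcing the exceptional set to be null.

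\textbf{Step 3: assembling and quasi-invariance.} Finally I would note that $\Y$ is quasi-invariant (the flip-quasi-invariance of $\mu$ plus the disintegration identity show that the $\Ncal$-saturation of a $\nu$-null set meets $\Y$ in a $\mu$-null set), so all four relations are quasi-invariant and countable; then the cycle $\Ncal\prec\RT\prec\WR\prec\Y\prec\Ncal$ collapses to the asserted chain of $\equiv$'s. The corollary on singular MASAs ($\NMA=U(A)\iff\Ncal\equiv\Delta Y\iff\RT\equiv\Delta Y$) is then immediate. The single hardest point remains Step 1: translating ``equivalent restricted representations'' into ``common atoms'' requires pinning down the multiplicity theory of the two interlocking disintegrations and controlling the null sets uniformly in $t$.
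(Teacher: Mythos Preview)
Your overall cycle $\Ncal\prec\RT\subset\WR\prec\Y\prec\Ncal$ is exactly the one the paper uses, and your diagnosis that the real content lies in $\WR\prec\Y$ and $\Y\prec\Ncal$ is correct. But Step~1 as you have it has a genuine gap, and Step~2 is organized somewhat differently from the paper.

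\medskip
\textbf{The gap in Step 1.} You set up the double disintegration $\K_t=\int_Y^\oplus\h_{s,t}\,d\mu_t(s)$ correctly and you correctly observe that $\pi_t\vert_{\mathcal C(Y)}$ is multiplication by the $s$-coordinate, so its unitary equivalence class is governed by the measure class of $\mu_t$ and the multiplicity $s\mapsto\dim\h_{s,t}$. But then you write: ``if $\mu_t(\{s\})=0$ then $\pi_t\vert_{\mathcal C(Y)}$ has no atom at $s$, yet $\pi_s\vert_{\mathcal C(Y)}\simeq\pi_t\vert_{\mathcal C(Y)}$,'' and expect a contradiction. There is none unless you know that $\pi_s\vert_{\mathcal C(Y)}$ \emph{does} have an atom at $s$, i.e.\ that $\mu_s(\{s\})>0$ for a.e.\ $s$. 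You never say this, and it is precisely the missing idea. The paper isolates it as a separate Claim: for any $f\in\mathcal C(Y)$, the scalar $f(t)$ is an eigenvalue of $\pi_t(f)$ for $\nu$-a.e.\ $t$. The reason is the sub-$A$-bimodule $L^2(A)\subset L^2(M)$: as a right $A$-module $L^2(A)=\int_Y^\oplus\C_t\,d\nu(t)$ with $\pi_t(f)$ acting on $\C_t$ by $f(t)$, so each fibre $\K_t$ contains a one-dimensional eigenspace for $\pi_t(f)$ with eigenvalue $f(t)$. Once this is in hand, pick one \emph{injective} $f\in\mathcal C(Y)$; if $(s_0,t)\in\WR$ off the relevant null sets then $f(s_0)$ is an eigenvalue of $\pi_{s_0}(f)\simeq\pi_t(f)=f_t$, and since $f_t$ is multiplication by the injective function $f$ over $\mu_t$, an eigenvector for $f(s_0)$ forces $\mu_t(\{s_0\})>0$. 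Symmetrizing gives $(s_0,t)\in\Y$. Your ``careful measure-theoretic bookkeeping'' is thus replaced by a one-line eigenvalue argument, but it hinges on the diagonal-atom fact that your outline omits.

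\medskip
\textbf{Comparison for Step 2.} Your plan is to produce, for each atom $(s,t)$, a local intertwining partial isometry in $M$ and then glue via the $\sum v_kp_k$ mechanism of Proposition~\ref{prop_Ncal}. The paper proceeds more globally: it first proves that for $X\subset\Y$ one has $\mu(X)=0\iff\nu(p_1(X))=0\iff\nu(p_2(X))=0$ (this is where flip-quasi-invariance of $\mu$ is used), then invokes a Lusin--Novikov type selection theorem (\cite[Theorem 18.10]{Kechris_bouquin}) to write $\Y$, off a null set, as a countable union of graphs $\Gamma_{h_k}$ of Borel automorphisms $h_k\in\IY$. For each such $h$ it builds the normalizing unitary in one stroke (Lemma~\ref{lem_Tak_graph c N}): choose a unit measurable section to embed $L^2(Y^2,\mu)\hookrightarrow L^2(M)$ as an $A$-bimodule, set $\xi=\chi_{\Gamma_h}$, observe $f\cdot\xi=\xi\cdot(f\circ h^{-1})$, and polar-decompose $\xi$ as an operator affiliated to $M$; the null-set claim forces the resulting partial isometry to be a unitary $u$ with $\Theta_u=h$. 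Your approach would also work in principle, but the paper's route avoids the local-to-global patching and makes the role of the measurable selection theorem explicit.
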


\begin{proof}
By definition, $\RT\subset\WR$.
%%%%%%%%%%%%%%%%%%%%%%%%%%%%%%%%%%%%%%%%%%%%%%%%%%%%%%%%%%%%%%%%%%%%%%%%%%%%%%%%%%%%%%%%%%%%%%%%%%%%%%%%%%%%
Let us show that $\WR\prec\Y$.
%%%%%%%%%%%%%%%%%%%%%%%%%%%%%%%%%%%%%%%%%%%%%%%%%%%%%%%%%%%%%%%%%%%%%%%%%%%%%%%%%%%%%%%%%%%%%%%%%%%%%%%%%%%%
Consider a continuous function $f\in \CY$.\\
%%%%%%%%%%%%%%%%%%%%%%%%%%%%%%%%%%%%%%%%%%%%%%%%%%%%%%%%%%%%%%%%%%%%%%%%%%%%%%%%%%%%%%%%%%%%%%%%%%%%%%%%%%%%
\textit{Claim:} The scalar $f(t)$ is an eigenvalue of the operator $\pi_t(f)$ $\nu$-\alev.
Proof of the claim: 
The inclusion $\AM$ gives us an inclusion of $A$-bimodules $_AL^\infty(Y,\nu)_A\subset{_A\LM_A}.$
As a right $A$-module, 
$$L^\infty(Y,\nu)_A=\int_Y^\oplus \C_t d\nu(t),$$
where $\C_t$ is the complex vector space of dimension one and $\pi_t(f)$ acts by multiplication by $f(t)$ on it $\nu$-\alev.
Therefore, $f(t)$ is an eigenvalue of $\pi_t(f)$ $\nu$-\alev.
%%%%%%%%%%%%%%%%%%%%%%%%%%%%%%%%%%%%%%%%%%%%%%%%%%%%%%%%%%%%%%%%%%%%%%%%%%%%%%%%%%%%%%%%%%%%%%%%%%%%%%%%%%%%

The \vna\ $\mathcal A$ is isomorphic to $L^\infty(Y^2,\mu)$ and it acts on the \Hs\ $\LM$.
Hence, there exists a measurable field of \Hs s $\hst$ over $(Y^2,\mu)$ such that we have an isomorphism of $\mathcal A$-modules:
$$\phi :{\LM}\simeq\int_{Y^2}^\oplus \hst d\mu(s,t).$$
We define the following direct integral of \Hs s 
$$\mathcal K_t=\int_Y^\oplus \hst d\mu_t(s).$$
By a result of Guichardet \cite[Proposition 1]{guichardet_avn_discrete}, we have an isomorphism of right $A$-modules:
$$\LM_A\simeq\int_Y^\oplus\mathcal K_t d\nu(t).$$
Consider a continuous and injective function $f\in \CY$.
For any $t\in Y$, we associate to $f$ an operator $f_t\in \mathbb B(\mathcal K_t)$ determined by 
$$f_t\xi=\int^\oplus_Yf(s) \xi_s d\mu_t(s),$$
for any $$\xi=\int^\oplus_Y \xi_s d\mu_t(s)\in\int^\oplus_Y \hst d\mu_t(s).$$
We remark that 
$$\pi(f)=\int^\oplus_Yf_t d\nu(t).$$
By uniqueness of the disintegration there exists a null set $N\subset Y$ such that $\pi_t(f)=f_t$ for any $t\in Y\backslash N$.
The claim implies that there exists a null set $N_0$ such that for any $t\in Y\backslash N_0$ we have that $f(t)$ is an eigenvalue of $\pi_t(f)$.
Let $(s_0,t)\in \WR\backslash (N\cup N_0)^2$, hence we have that $f(s_0)$ is an eigenvalue of the operator $f_t$.
Then there exists a non null vector
$$\eta=\intI \eta_sd\mu_t(s)\in \K_t$$
such that $f_t(\eta)=f(s_0)\eta$, meaning that $(f(s)-f(s_0))\eta_{s}=0$ $\mu_t$-\alev.
This implies that $s_0$ is an atom of $\mu_t$  because $f$ is injective. By exchanging the role of $s_0$ and $t$ we get that $(s_0,t)\in \Y$.
Therefore, $\WR\prec \Y$.
%%%%%%%%%%%%%%%%%%%%%%%%%%%%%%%%%%%%%%%%%%%%%%%%%%%%%%%%%%%%%%%%%%%%%%%%%%%%%%%%%%%%%%%%%%%%%%%%%%%%%%%%%%%%

Let us show that $\Y\prec\Ncal.$
\textit{Claim:}Let $X\subset \Ymus$ be a measurable subset.
The following assertions are equivalent:
\begin{enumerate}
  \item $X$ is a null set for $\mu$;
  \item $p_1(X)$ is a null set for $\nu$;
  \item $p_2(X)$ is a null set for $\nu$.
\end{enumerate}
Proof of the claim:
We have that
$$\mu(X)=\int_Y\mu_t(\{s,\ (s,t)\in X\})d\nu(t)=\int_{p_2(X)}\mu_t(\{s,\ (s,t)\in X\}  )d\nu(t).$$
The set $X$ is contained in $\Ymus$, hence for any $t\in p_2(X)$, $\mu_t(\{s,\ (s,t)\in X\}  )$ is strictly positif.
Therefore $\mu(X)=0$ if and only if $p_2(X)$ is a null set.
We know that the class of the measure $\mu$ is invariant under the flip; thus, $\mu(X)=0$ if and only if $\mu(\theta(X))=0$ if and only if $p_1(X)$ is a null set.\\
Let $h:N\subset Y\longrightarrow Y$ be a measurable map defined on a measurable subset $N\subset Y$ such that its graph $\G_h$ is weakly contained in $ \Y$.
We have that $p_1(\G_h)=h(N)$ and $p_2(\G_h)=N$.
Thus by the claim we have that $\nu(N)=0$ if and only if $h(N)$ is a null set.

The set of atoms is measurable, hence there exists a $\mu$-null set $N\subset Y^2$ such that $\Y\backslash N$ is a Borel set.
By \cite[Theorem 18.10]{Kechris_bouquin} there exists a countable family of  Borel automorphisms $h_k\in \IY$ such that 
$$\Y\backslash N= \bigcup_k\Gamma_{h_k}.$$
The claim implies that there exists a null set $N_0\subset Y$ such that $N\subset N_0^2$. 
Therefore 
$\Y$ is equivalent to the union of graphs $\bigcup_k\Gamma_{h_k}.$
To conclude that $\Y\prec \Ncal$, we need to show that for any $h_k$ there exists a unitary $u_k\in\NMA$ such that $\Theta_{u_k}=h_k$ $\nu$-\alev.
Hence, by considering the subgroup of $\NMA$ generated by the $u_k$ we will get that $\Y\prec\Ncal$.
%%%%%%%%%%%%%%%%%%%%%%%%%%%%%%%%%%%%%%%%%%%%%%%%%%%%%%%%%%%%%%%%%%%%%%%%%%%%%%%%%%%%%%%%%%%%%%%%%%%%%%%%%%%%%%%%%%
\begin{lemm}\label{lem_Tak_graph c N}
Consider a Borel automorphism $h\in \IY$ such that its graph $\G_h$ is weakly contained in $\Y$.
Then, there exists a unitary in the normalizer of $A$, $u\in N_M(A)$, such that $\Theta_u= h$ $\nu$-\alev.
\end{lemm}
%%%%%%%%%%%%%%%%%%%%%%%%%%%%%%%%%%%%%%%%%%%%%%%%%%%%%%%%%%%%%%%%%%%%%%%%%%%%%%%%%%%%%%%%%%%%%%%%%%%%%%%%%%%%%%%%%%
\begin{proof}
Let us write $L^2(M)$ as a direct integral of \Hs s over the \mesp\ $(Y^2,\mu)$:
$$\psi:\LM\longrightarrow\intII \hst d\mu(s,t).$$
Consider a vector
$$\xi^0=\intII \xist^0 d\mu(s,t)$$
such that $\Vert \xist^0\Vert_{\hst}=1$ $\mu$-\alev.
Then the $A$-bimodule generated by the vector $\xi^0$ is giving us an embedding of bimodules $\Ldd\subset \LM$.
Let $\xi=\chi_{\G_h}\in\Lii\subset\LM$ be the characteristic function of the graph of $h$.
The preceding claim implies that $\mu(\G_h)$ is strictly positif.
It is easy to see that $\Vert \xi\Vert_2^2=\mu(\G_h)$, hence $\xi$ is different from zero.
Furthermore, for any $f\in A$, 
\begin{equation}\label{equa_entrelacement}
f.\xi=\xi.f\circ h^{-1}.
\end{equation}
The vector $\xi$ is an affiliated operator to $M$, therefore it admits a spectral decomposition $\xi=u\vert\xi\vert$, where $u$ is a partial isometry of $M$ and $\vert\xi\vert\in\LM_+$.
Let $p$ be the support of the partial isometry $u$. It is a projection of $A$, hence there exists a measurable subset $E\subset Y$ such that $1-p=\chi_E$.
We have that 
$$\Vert q.\xi\Vert^2=\int^\oplus_{Y^2} \chi_E(s)\chi_{\G_h}(s,t)d\mu(s,t)=\mu(\{(s,h(s)),\ s\in E\}).$$
By the preceding claim, $\mu(\{(s,h(s)),\ s\in E\})=0$ \ifff\ $\nu(E)=0$.
Therefore, $1-p=0$.
This implies that $u$ is a unitary.
We have that $ufu^*=f\circ h$ for any $f\in A$.
\end{proof}
%%%%%%%%%%%%%%%%%%%%%%%%%%%%%%%%%%%%%%%%%%%%%%%%%%%%%%%%%%%%%%%%%%%%%%%%%%%%%%%%%%%%%%%%%%%%%%%%%%%%%%%%%%%%%%%%%%
Therefore, we have that $\Y\prec\Ncal$.
%%%%%%%%%%%%%%%%%%%%%%%%%%%%%%%%%%%%%%%%%%%%%%%%%%%%%%%%%%%%%%%%%%%%%%%%%%%%%%%%%%%%%%%%%%%%%%%%%%%%%%%%%%%%%%%%%%

Let us show that $\Ncal\prec \RT.$
Let $G<\NMA$ be a countable subgroup of $\NMA$ that implements the orbit equivalence relation $\Ncal$.
Let $u\in G$, by \cite[Theorem 1.2]{takesaki_invariant_masa}, the graph of $\Theta_u$ is weakly contained in $\mathcal R.$
Therefore, $\Ncal\prec \mathcal R$.
This achieves the proof of the theorem.
\end{proof}

\begin{cor}
A MASA in a finite \vna\ is singular \ifff\ it is Takesaki simple.
\end{cor}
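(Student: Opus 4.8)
The plan is to read this off from Theorem \ref{theo_Tak_main_result} and Proposition \ref{prop_Ncal}, since a MASA $\AM$ is singular exactly when $\NMA''=A$ and is Takesaki simple exactly when $\RT\equiv\Delta Y$. Because Theorem \ref{theo_Tak_main_result} gives $\RT\equiv\Ncal$, it is enough to show that $A$ is singular if and only if $\Ncal\equiv\Delta Y$.

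For the direct implication, assume $A$ is singular, so $\NMA''=A$. Apply the last assertion of Proposition \ref{prop_Ncal} to the trivial subgroup $H=\{1\}<\NMA$: here $\{H\cup A\}''=A=\NMA''$, so $\Ncal_H\equiv\Ncal$. On the other hand $\Ncal_{\{1\}}=\{(\Theta_1(t),t),\ t\in Y\}=\Delta Y$ since $\Theta_1=\mathrm{id}$, whence $\Ncal\equiv\Delta Y$ and $A$ is Takesaki simple.

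For the converse I would argue by contraposition. Suppose $A$ is not singular, so $\NMA''\neq A$. Since the unitaries of $A$ all lie in $\NMA$, were $\NMA$ entirely contained in $A$ we would have $\NMA''\subseteq A''=A$; hence there is a unitary $u\in\NMA$ with $u\notin A$. Let $H=\langle u\rangle<\NMA$ be the cyclic group it generates. By Proposition \ref{prop_Ncal} we have $\Ncal_H\prec\Ncal$, and by construction $\Ncal_H$ contains the graph $\G_{\Theta_u}$ of the Borel automorphism $\Theta_u$ of $(Y,\nu)$. If $A$ were Takesaki simple we would have $\Ncal\equiv\Delta Y$, hence $\G_{\Theta_u}\prec\Delta Y$, i.e. $\Theta_u(t)=t$ for $\nu$-almost every $t$; this forces $u^*fu=f$ for every $f\in A$, that is $u\in A'\cap M$. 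But $\AM$ is a MASA, so $A'\cap M=A$, contradicting $u\notin A$. Therefore $\Ncal\not\equiv\Delta Y$, so $A$ is not Takesaki simple.

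The whole argument is formal once the main theorem is available; the one substantive ingredient is the maximal abelianness of $A$ used in the converse, which is precisely what turns the algebraic statement $u\notin A$ into the measure-theoretic statement that $\Theta_u$ moves a set of positive measure, and hence that $\G_{\Theta_u}$, and a fortiori $\Ncal$, fails to be weakly contained in the diagonal.
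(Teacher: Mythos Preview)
Your proof is correct and follows the same route as the paper, which simply states that the corollary is ``a direct consequence of the fact that $\RT\equiv\Ncal$''; you have merely spelled out the details the paper leaves implicit, invoking Proposition~\ref{prop_Ncal} for the forward direction and the maximality of $A$ for the converse.
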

\begin{proof}
This is a direct consequence of the fact that $\RT\equiv\Ncal$.
\end{proof}

\begin{rem}
Consider a $A$-bimodule $_A\h_A$.
We can define an \eqre\ as follows.
Let $\{\K_t,\ t\in Y\}$ be a measurable field of \Hs s such that the right $A$-module $\h_A$ is isomorphic to the direct integral 
$$\int^\oplus_Y \K_td\nu(t).$$
Let $\lambda:A\longrightarrow \mathbb B(\h)$ be the left action of $A$.
Consider the measurable field of representations of $\CY$, $\{\lambda_t,\ t\in Y\}$, such that 
$$\lambda=\int^\oplus_Y \lambda_td\nu(t).$$
Let $\WR_\h$ be the \eqre\ $\{(s,t)\in Y^2,\, \lambda_t\simeq\lambda_s\}$.
If $\h=\LM$ as $A$-bimodules, we get that $\WR_\h$ is the bimodule equivalence relation $\WR$.
Therefore, the bimodule \eqre\ can be reconstructed from $_A\LM_A$, and so does the \eqre\ $\Ncal$ by the theorem \ref{theo_Tak_main_result}.
Feldman and Moore \cite{Feldman_Moore_Equv_Relations_Cohomology_vna_I,Feldman_Moore_Equv_Relations_Cohomology_vna_II} showed that a Cartan subalgebra is characterized by the \eqre\ $\Ncal$ and a $2$-cocycle.
Hence, a Cartan subalgebra is characterized by the $A$-bimodule $\LM$ and a $2$-cocycle.
\end{rem}
%%%%%%%%%%%%%%%%%%%%%%%%%%%%%%%%%%%%%%%%%%%%%%%%%%%%%%%%%%%%%%%%%%%%%%%%%%%%%%%%%%%%%%%%%%%%%%%%%%%%%%%%%%%%

%%%%%%%%%%%%%%%%%%%%%%%%%%%%%%%%%%%%%%%%%%%%%%%%%%%%%%%%%%%%%%%%%%%%%%%%%%%%%%%%%%%%%%%%%%%%%%%%%%%%%%%%%%%%
\section{Illustration and consequences of the main theorem}
%%%%%%%%%%%%%%%%%%%%%%%%%%%%%%%%%%%%%%%%%%%%%%%%%%%%%%%%%%%%%%%%%%%%%%%%%%%%%%%%%%%%%%%%%%%%%%%%%%%%%%%%%%%%

%%%%%%%%%%%%%%%%%%%%%%%%%%%%%%%%%%%%%%%%%%%%%%%%%%%%%%%%%%%%%%%%%%%%%%%%%%%%%%%%%%%%%%%%%%%%%%%%%%%%%%%%%%%%
\subsection{Group \vna s}
%%%%%%%%%%%%%%%%%%%%%%%%%%%%%%%%%%%%%%%%%%%%%%%%%%%%%%%%%%%%%%%%%%%%%%%%%%%%%%%%%%%%%%%%%%%%%%%%%%%%%%%%%%%%
Let $H<G$ be an inclusion of discrete countable groups such that $H$ is abelian and 
\begin{equation}\label{equa_relative_ICC}
\text{for any}\ g\in G\backslash H\ \text{the set}\ \{hgh^{-1},\ h\in H\}\ \text{is infinite}.
\end{equation}
This implies that the inclusion of group \vna s $L(H)\subset L(G)$ is a MASA in a finite \vna\ (see Godement \cite{Godement_trans_fourier_groupes_discrets}).
Let $C\subset G$ be a system of left coset representatives of $H$.
We denote the unity of $G$ by $1$ and assume that $1\in C$.
Consider two functions $\sigma:G\rightarrow C$ and $\eta:G\rightarrow H$ such that for any $g\in G$ we have $g=\sigma(g)\eta(g).$
If $I$ is a set we denote the Hilbert space of square summable complex valued functions on $I$ by $\ell^2(I)$.
We denote the standard basis of $\ell^2(G)$ (resp. $\ell^2(H)$, resp. $\ell^2(C)$) by $\{e_g,\,g\in G\}$ (resp. $\{e_h,\,h\in H\}$ resp. $\{\varepsilon_c,\,c\in C\}$).
Let $\pi,\rho:G\curvearrowright \ell^2(G)$ be the left and right regular representations of the group $G$.\\
Let us decompose $\pi$ \wrt\ the right action of $H$.
Consider the dual group of $H$ with its Haar measure $(\widehat H,\nu)$ and the Fourier transform $\mathcal F:\ell^2(H)\longrightarrow L^2(\widehat H,\nu)$ defined such that
$\mathcal F(e_h)(t)=t(h)$ for any $h\in H$ and any character $t\in \widehat H$.
We have an isomorphism of right $L(H)$-modules
$\phi:\ell^2(G)\longrightarrow \ell^2(C)\otimes L^2(\widehat H,\nu)$ given by the formula 
$$\phi(e_g)(t)= t(\eta(g))\varepsilon_{\sigma(g)}.$$
Consider the representation $\pi_t:G\curvearrowright \ell^2(C)$ defined such that 
$$\pi_t(g)\varepsilon_c=t(\eta(gc))\varepsilon_{\sigma(gc)},$$
for any $t\in \widehat H$, $g\in G$ and $c\in C$.
An easy computation shows that this gives a disintegration of the representation $\pi$ \wrt\ the right action of $H$, i.e.
$$\pi=\int_{\widehat H} \pi_td\nu(t).$$

\begin{prop}\label{prop_groups}
If $s,t$ are two characters of $H$, then the two representations $\pi_s$ and $\pi_t$ are unitarily equivalent \ifff\ their restrictions to the abelian subgroup $H$ are unitarily equivalent.
Denote by $\AM$ the inclusion of \vna s $L(H)\subset L(G)$.
The group normalizer $\NGH$ generates the same \vna\ than the normalizer of the algebras $\NMA$.
\end{prop}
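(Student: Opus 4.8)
The plan is to reduce both assertions, by way of Mackey's machinery, to a single statement — that $\pi_s|_H\simeq\pi_t|_H$ exactly when $t$ is an $N_G(H)$-conjugate of $s$ — and then to feed that statement into Theorem \ref{theo_Tak_main_result} and Proposition \ref{prop_Ncal}.

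First I would record two elementary facts. (i) For $g\in\NGH$ one has $u_gL(H)u_g^{*}=L(gHg^{-1})=L(H)$, so $u_g\in\NMA$, and under $A\cong L^\infty(\widehat H,\nu)$ the automorphism $\Theta_{u_g}$ of $\widehat H$ is $\chi\mapsto{}^{g}\chi$, where ${}^{g}\chi(h):=\chi(g^{-1}hg)$. (ii) The disintegration $\pi=\int_{\widehat H}\pi_t\,d\nu(t)$ identifies $\pi_t$ with the monomial representation $\mathrm{Ind}_H^G(t)$ on $\ell^2(G/H)\cong\ell^2(C)$, and for $g\in\NGH$ conjugation by the unitary $\pi(u_g)$ carries $\mathrm{Ind}_H^G(s)$ onto $\mathrm{Ind}_{gHg^{-1}}^G({}^{g}s)=\mathrm{Ind}_H^G({}^{g}s)$, so $\pi_{{}^{g}s}\simeq\pi_s$. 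Thus $\pi_s\simeq\pi_t$ trivially gives $\pi_s|_H\simeq\pi_t|_H$, and $t={}^{g}s$ with $g\in\NGH$ gives $\pi_s\simeq\pi_t$; the whole content of the first assertion is therefore the implication
\[
\pi_s|_H\simeq\pi_t|_H\ \Longrightarrow\ t={}^{g}s\text{ for some }g\in\NGH .
\]

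To prove this implication I would invoke Mackey's restriction theorem: writing $K_d:=H\cap dHd^{-1}$ and $t^d\in\widehat{K_d}$ with $t^d(h)=t(d^{-1}hd)$, one has $\pi_t|_H=\bigoplus_{d\in H\backslash G/H}\mathrm{Ind}_{K_d}^{H}(t^d)$, the trivial double coset $d=1$ contributing the one-dimensional subspace on which $H$ acts by $t$. The role of the MASA hypothesis \eqref{equa_relative_ICC} is through the equivalent statement $Z_G(H)=H$ (centralizer), from which I would extract the key non-degeneracy: if $d\notin H$ and $[H:K_d]<\infty$, then the subgroup $L_d:=\langle\,h\cdot d^{-1}h^{-1}d:\ h\in K_d\,\rangle$ of $H$ is non-trivial — for $L_d=\{1\}$ would force $K_d\subseteq Z_G(d)\cap H=Z_H(d)\subseteq K_d$, hence $K_d=Z_H(d)$, so the conjugacy class $\{hdh^{-1}:h\in H\}$ of cardinality $[H:K_d]$ would be finite, against \eqref{equa_relative_ICC}. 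Consequently, for every character outside the countable union of proper closed subgroups $\bigcup_{d\ne1,\ [H:K_d]<\infty}L_d^{\perp}$, that character occurs in its own $\pi_\chi|_H$ with multiplicity one, coming only from $d=1$. Then, assuming $\pi_s|_H\simeq\pi_t|_H$, transporting the eigenvector for the character $t$ through the intertwiner and solving the resulting cocycle relation on $\ell^2(C)$ shows $t$ occurs in $\pi_s|_H$ through a single double coset $d_0$ with $t(h)=s(d_0^{-1}hd_0)$ on $K_{d_0}$; matching the full spectral data of $\pi_s|_H$ and $\pi_t|_H$ — the atomic part and the purely continuous singular part built from Haar measures on cosets of the $K_d^{\perp}$, using the non-triviality of the $L_d$ to keep distinct double cosets from producing identical spectral measures — forces $K_{d_0}=H$; and $d_0\in\NGH$ follows because otherwise $[d_0Hd_0^{-1}:H]\ge2$, so induction in stages gives $\pi_t\simeq\pi_s^{\oplus[d_0Hd_0^{-1}:H]}$, hence $\pi_s|_H\simeq(\pi_s|_H)^{\oplus2}$, impossible since $s$ has finite multiplicity in $\pi_s|_H$. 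I expect the genuine obstacle to be exactly this last multiplicity bookkeeping at the measure-zero set of degenerate characters (e.g.\ characters of finite order), where a priori several double cosets can feed the same atom of $\pi_t|_H$; there one must argue with the finer form of the multiplicity function, the MASA hypothesis again being what prevents the contributions from collapsing.

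Finally, for the second assertion: by (i) the countable subgroup $\{u_g:g\in\NGH\}$ of $\NMA$ generates a von Neumann algebra containing $A$, namely $L(\NGH)$, so already $L(\NGH)\subseteq\NMA''$; and by (i) its orbit equivalence relation on $(\widehat H,\nu)$ is precisely $\{(s,t):t={}^{g}s\text{ for some }g\in\NGH\}$, which by the displayed implication coincides (up to a null set) with $\WR$ — here $\WR$ is the bimodule equivalence relation, which in the present situation is just $\{(s,t):\pi_s|_H\simeq\pi_t|_H\}$ — and $\WR=\RT$ by the first assertion. Since $\RT\equiv\Ncal$ by Theorem \ref{theo_Tak_main_result}, the orbit relation of $\{u_g:g\in\NGH\}$ is $\equiv$ to $\Ncal$; Proposition \ref{prop_Ncal} then yields $\{\{u_g:g\in\NGH\}\cup A\}''=\NMA''$, that is $L(\NGH)=\NMA''$, which together with the previous inclusion is the claim.
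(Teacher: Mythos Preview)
Your reduction of the second assertion to Proposition \ref{prop_Ncal} matches the paper, but your attack on the key implication $\pi_s|_H\simeq\pi_t|_H\Rightarrow t\in\NGH\cdot s$ is both overcomplicated and incomplete. The paper does not use Mackey theory at all; it argues directly on $\ell^2(C)$, and the argument works for \emph{every} pair $(s,t)$, not merely generic ones. Given a unitary $v$ on $\ell^2(C)$ with $v^*\pi_s(h)v=\pi_t(h)$ for all $h\in H$, the vector $y:=v(\varepsilon_1)=\sum_c y_c\varepsilon_c$ satisfies $\pi_s(h)y=t(h)y$; in coordinates this reads $y_c\,s(\eta(hc))=y_{\sigma(hc)}\,t(h)$. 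Hence $|y_{\sigma(hc)}|=|y_c|$ for every $h\in H$, and since $y\in\ell^2(C)$ any $c$ with $y_c\neq0$ must have $\sigma(Hc)$ finite, i.e.\ $HcH$ contains only finitely many left $H$-cosets. The relative ICC hypothesis \eqref{equa_relative_ICC} then forces $c\in\NGH$, and the coordinate identity immediately gives $s=\mathrm{ad}_c(t)$. The converse inclusion is an explicit intertwining unitary. No spectral bookkeeping, no exceptional null sets; in particular the paper obtains exact equality $\RT=\WR=\{(s,t):t\in\NGH\cdot s\}$, not merely $\equiv$.

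Your route has genuine gaps beyond the one you yourself flag. First, the proposition asserts the equivalence for \emph{all} characters, so an argument valid only off $\bigcup_d L_d^{\perp}$ does not establish the first assertion (it would, via Theorem \ref{theo_Tak_main_result}, still suffice for the second). Second, ``matching the full spectral data forces $K_{d_0}=H$'' is not justified: knowing that the atom $t$ of $\pi_s|_H$ sits in $\mathrm{Ind}_{K_{d_0}}^{H}(s^{d_0})$ with $[H:K_{d_0}]<\infty$ is perfectly compatible with $[H:K_{d_0}]>1$, since the additional characters this induction produces could be matched by other summands of $\pi_t|_H$. Third, the induction-in-stages step is garbled: $K_{d_0}=H$ only says $H\subset d_0Hd_0^{-1}$, and a strict inclusion there does not yield $\pi_t\simeq\pi_s^{\oplus n}$ in the way you claim. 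Finally, the aside that \eqref{equa_relative_ICC} is equivalent to $Z_G(H)=H$ is false (relative ICC is strictly stronger), although you do correctly use the full condition afterward.
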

\begin{proof}
We recognize the Takesaki \eqre\ and the bimodule \eqre\ of $L(H)\subset L(G)$ which are respectively $\RT=\{(s,t)\in\wH^2,\ \pi_s\simeq\pi_t\}$  and $\WR=\{(s,t)\in\wH^2,\ \pi_s\vert_H\simeq\pi_t\vert_H\}$.
Consider the \oreqre\ $\Ncal_0$ given by the action $ad:\NGH\curvearrowright\wH$.
If $\RT=\WR$, then we have the first assertion of the proposition.
If $\Ncal_0\equiv \WR$, then the proposition \ref{prop_Ncal} implies the second assertion of the proposition.
Let us show that $\RT=\WR=\Ncal_0$.
By definition, $\mathcal R\subset \WR$.
Let us show that $\WR\subset \Ncal_0$.
Let $(s,t)\in \WR$, and $v$ a unitary of $\ell^2(C)$ such that $v^*\pi_s(h) v=\pi_t(h)$ for any $h\in H$.
Consider $h\in H$,
$$ \pi_s (h) v(\varepsilon_1) =v \pi_t (h) (\varepsilon_1) =v ( t(h) \varepsilon_1 ) = t(h) v(\varepsilon_1).$$
We denote by $y=\sum_{c\in C}y_c\varepsilon_c$ the vector $v(\varepsilon_1)\in \ell^2(C)$, decomposed in the orthonormal basis $\{\varepsilon_c,\ c\in C\}$.
Thus, for any $h\in H$ and any $c\in C$,
\begin{equation}\label{egalitey}
     y_c s(\eta(hc))=y_{\sigma(hc)}t (h).
\end{equation}
Let $c\in C$ such that $y_c\neq 0$.
The last identity tell us that for any $h$ in $H$, $\vert y_{\sigma(hc)}\vert=\vert y_c\vert.$
Therefore $p(HcH)$ is finite, where $p:G\twoheadrightarrow G/H$ is the canonical projection on the set of the right cosets.
The condition \ref{equa_relative_ICC} implies that $c$ is in the normalizer $\NGH$.
The equation \ref{egalitey} implies that $s=ad_c(t)$.
Hence $\WR\subset \Ncal_0$.

Let us show that $\Ncal_0\subset \RT$. 
Consider $(s,t)\in \Ncal_0$ and $g\in\NGH$ such that $s=ad_g(t)$.
Let $u$ be the unitary of $\ell^2(C)$ defined as follows:
$$u(\varepsilon_c)=s(\eta(cg))\varepsilon_{\sigma(cg)},$$
where $c\in C$.
An easy computation shows that $u\pi_tu^*=\pi_s$, hence $\Ncal_0\subset\RT$.
\end{proof}

%%%%%%%%%%%%%%%%%%%%%%%%%%%%%%%%%%%%%%%%%%%%%%%%%%%%%%%%%%%%%%%%%%%%%%%%%%%%%%%%%%%%%%

%%%%%%%%%%%%%%%%%%%%%%%%%%%%%%%%%%%%%%%%%%%%%%%%%%%%%%%%%%%%%%%%%%%%%%%%%%%%%%%%%%%%%%%%%%%%%%%%%%%%%%%%%%%%

\subsection{Tensor product of MASAs}\label{section_Tak tensor product}

%%%%%%%%%%%%%%%%%%%%%%%%%%%%%%%%%%%%%%%%%%%%%%%%%%%%%%%%%%%%%%%%%%%%%%%%%%%%%%%%%%%%%%%%%%%%%%%%%%%%%%%%%%%%%%%%%%%%%%%%%%%%%%%%%%%%%%%%%%%%%%%%%%%%%
In this section we compute the set of atoms for MASAs constructed from tensor product of MASAs.
We deduce a result on the tensor product of \vna s generated by the normalizers.

Let $\Lambda$ be a countable set and $\{A_l\subset M_l,\,l\in\Lambda\}$ a family of MASAs in some finite \vna s. We fix a trace $\tau_l$ on each $M_l$.
Consider the infinite tensor products of \vna s \wrt\ the traces $\tau_l$
$$\bigotimes_{l\in\Lambda}A_l\subset \bigotimes_{l\in\Lambda} M_l.$$
We denote those \vna s by $A=\bigotimes_l A_l$ and by $M=\bigotimes_l M_l$.
The Tomita commutant theorem implies that $\AM$ is a MASA.

For any $l\in\Lambda$, we consider a \sps\ $(Y_l,\nu_l)$ and identify $A_l$ with $L^\infty(Y_l,\nu_l)$.
Let $L^2(M_l)$ be the GNS \Hs\ and $\pi_l,\rho_l:M_l\longrightarrow\mathbb B( L^2(M_l))$ the left and right actions of $M_l$.
Consider the \vna\ $\Al=\{\pi_l(A_l)\cup\rho_l(A_l)\}''\subset\mathbb B(\LMl)$.
Let $\mu_l$ be a measure on $Y_l^2$ such that $\mu_l(\Delta Y_l)=1$ and such that $\Al\simeq L^\infty(Y_l^2,\mu_l)$, where $\Delta Y_l$ is the diagonal.
Consider a disintegration of $\mu_l$ \wrt\ the projection $p_l(s,t)=t$:
$$\mu_l=\int_{Y_l} \mu_{l,t} d\nu_l(t).$$
We denote the set of atoms of $\AMl$ by $\Y_l$.
Let $Y$ be the cartesian product of the $Y_l$ with the $\sigma$-algebra generated by the subsets of $Y$ of the form $\prod_l X_l$, where $X_l\subset Y_l$ is a measurable subset that is equal to $Y_l$ for all but a finite number of $l\in\Lambda$.
Let $\nu=\otimes_l \nu_l$ be the unique probability measure on $Y$ that satisfies $\nu(\prod_l X_l)=\prod_l\nu_l(X_l)$.
We identify $A$ with $L^\infty(Y,\nu)$.
We denote an element of $Y$ by $\underline t$ and its $l$-component by $t_l$.

\begin{theo}\label{theo_Tak_tensor}
The set of atoms $\Y$ of $\AM$ is the set of couples $(\underline s,\underline t)$ such that for any $l$ $(s_l,t_l)\in\Y_l$ and $s_l=t_l$ for all but a finite number of $l\in\Lambda$.
\end{theo}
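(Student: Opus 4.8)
The plan is to compute the algebra $\mathcal A = \{\pi(A), \rho(A)\}''$ for the tensor product inclusion and read off its disintegration, since the set of atoms is defined purely in terms of the disintegration of $\mu$ over $(p,\nu)$. The crucial observation is that $\LM = \bigotimes_l L^2(M_l)$ as $A$-bimodules (with respect to the trace vectors), and under this identification $\pi(A) = \bigotimes_l \pi_l(A_l)$ and $\rho(A) = \bigotimes_l \rho_l(A_l)$ act diagonally. Hence $\mathcal A$ is the (infinite, trace-$\tau$-relative) tensor product $\bigotimes_l \mathcal A_l$, and since each $\mathcal A_l \simeq L^\infty(Y_l^2, \mu_l)$ with $\mu_l(\Delta Y_l) = 1$, the tensor product is $L^\infty(Y^2, \mu)$ where $\mu = \bigotimes_l \mu_l$ is the unique probability measure on $Y^2 = \prod_l Y_l^2$ with the product property; here one must take the $\sigma$-algebra on $Y^2$ generated by cylinders $\prod_l Z_l$ with $Z_l = \Delta Y_l$ (equivalently $Z_l = Y_l^2$) for all but finitely many $l$. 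The point of the condition $\mu_l(\Delta Y_l) = 1$ is precisely that the infinite product measure is well-defined and concentrated, up to null sets, on the set of sequences $(s_l, t_l)_l$ with $s_l = t_l$ eventually.

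Next I would identify the disintegration $\mu = \int_Y \mu_{\underline t}\, d\nu(\underline t)$. The natural candidate is the "product" of the fibre measures: $\mu_{\underline t} = \bigotimes_l \mu_{l, t_l}$, a measure on $Y = \prod_l Y_l$. One checks that this is well-defined as a probability measure on the product $\sigma$-algebra because $\mu_{l, t_l}$ is a probability measure on $Y_l$ for a.e.\ $t_l$ (for each fixed $l$), and one verifies the two defining properties of a disintegration: measurability of $\underline t \mapsto \int_Y f\, d\mu_{\underline t}$ for positive measurable cylinder $f$, and the Fubini-type identity $\mu(f) = \int_Y \int_Y f(\underline s, \underline t)\, d\mu_{\underline t}(\underline s)\, d\nu(\underline t)$. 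Both reduce to the finite-tensor case by approximating $f$ by cylinder functions and then pass to the infinite product by monotone convergence; uniqueness of disintegration (already invoked in the paper) then pins down $\mu_{\underline t}$ up to a $\nu$-null set.

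Finally, with the disintegration in hand, the set of atoms is computed by an atom-of-a-product-measure argument: a point $\underline s \in Y$ is an atom of $\mu_{\underline t} = \bigotimes_l \mu_{l, t_l}$ iff $\mu_{\underline t}(\{\underline s\}) = \prod_l \mu_{l, t_l}(\{s_l\}) > 0$, which forces $\mu_{l, t_l}(\{s_l\}) > 0$ for every $l$ and $\mu_{l, t_l}(\{s_l\}) = 1$ for all but finitely many $l$. For those cofinitely many $l$, since $\mu_{l, t_l}$ is a probability measure concentrated (for a.e.\ $t_l$, up to a null set coming from $\mu_l(\Delta Y_l) = 1$) in a way that makes $t_l$ its unique possible atom of full mass, one gets $s_l = t_l$. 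Imposing the symmetric condition (that $\underline t$ also be an atom of $\mu_{\underline s}$) yields $(s_l, t_l) \in \Y_l$ for all $l$ and $s_l = t_l$ for all but finitely many $l$, which is exactly the claimed description. The main obstacle is bookkeeping around null sets: the identifications $\mathcal A_l \simeq L^\infty(Y_l^2,\mu_l)$, the choice of fibre measures $\mu_{l,t_l}$, and the "eventually $s_l = t_l$" statements each hold only almost everywhere, and one must check that the exceptional sets assemble — across a countable index set $\Lambda$ — into a genuine $\mu$-null (resp.\ $\nu$-null) set rather than accumulating uncontrollably; this is where the restriction to the cylinder $\sigma$-algebra and the probability (rather than merely $\sigma$-finite) normalization of each $\mu_l$ do the real work.
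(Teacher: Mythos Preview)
Your overall strategy --- identify $L^2(M)$ with $\bigotimes_l L^2(M_l)$, compute $\mathcal A$ and the measure on $Y^2$, then disintegrate and read off atoms of a product measure --- is exactly the paper's. The paper phrases the middle step as a direct-integral decomposition $L^2(M)\simeq\int^\oplus_{Y^2}\h_{\underline s,\underline t}\,d\mu$ with fibres $\h_{\underline s,\underline t}=\bigotimes_l\h^l_{s_l,t_l}$ taken relative to the decomposed trace vectors $\Omega^l_{s_l,t_l}$, and then asserts the atom description ``by the construction of $\mu$''; your product-of-disintegrations and atoms-of-a-product argument actually makes that last step more explicit than the paper does.

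The gap is in your identification of $\mu$. You set $\mu=\bigotimes_l\mu_l$ and call it a probability measure while simultaneously invoking $\mu_l(\Delta Y_l)=1$. These are incompatible: if $\mu_l$ is a probability measure in the correct measure class for $\mathcal A_l$, then $\mu_l(\Delta Y_l)<1$ unless $A_l=M_l$, and the ordinary infinite product $\bigotimes_l\mu_l$ will in general \emph{not} be concentrated on the eventually-diagonal set (your parenthetical ``$Z_l=\Delta Y_l$ (equivalently $Z_l=Y_l^2$)'' is only valid when $\mu_l$ lives on the diagonal). Conversely, if one insists on $\mu_l(\Delta Y_l)=1$ with the right measure class, $\mu_l$ is typically not finite and the infinite product is undefined. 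The paper sidesteps this by never forming a single product measure: for each finite $E\subset\Lambda$ it sets
\[
\mu_E\Bigl(\prod_{l\in\Lambda}X_l\Bigr)=\prod_{l\in E}\mu_l(X_l)\cdot\prod_{l\in\Lambda\setminus E}\mu_l(X_l\cap\Delta Y_l),
\]
and takes $\mu$ to be any representative of the measure \emph{class} generated by all the $\mu_E$. This is precisely the restricted-product construction your cylinder $\sigma$-algebra is reaching for; once you replace ``$\mu=\bigotimes_l\mu_l$'' by this class, the support is forced onto the eventually-diagonal set for structural reasons (not via a Borel--Cantelli estimate), and your downstream computation of $\mu_{\underline t}$ and its atoms goes through.
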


\begin{proof}
Let $\Omega^l$ be the image of the unity of $M_l$ in the \Hs\ $\LMl$.
Consider the infinite tensor product of \Hs s $\bigotimesl \LMl$ \wrt\ the vectors $\Omega^l$.
There is a unitary transformation between $\LM$ and $\bigotimesl \LMl$ that conjugates the actions $\pi,\rho$ with the tensor product of actions $\otimesl \pi_l$ and $\otimesl \rho_l$.
For any $l\in\Lambda$, there exists a measurable field of \Hs s $\{\hst^l,\, s,t\in Y_l\}$ such that the $A_l$-bimodule $\LMl$ is isomorphic to the direct integral
$$\int^\oplus_{Y_l^2}\hst^ld\mu_l(s,t).$$
Consider the disintegration of the vector $\Omega^l$ which is
$$\int^\oplus_{Y_l^2}\Omega_{s,t}^ld\mu_l(s,t).$$
For any couple $(\underline s,\underline t)\in Y^2$, we consider the infinite tensor product of \Hs s $\h_{\underline s,\underline t}=\bigotimesl \h_{s_l,t_l}^l$ \wrt\ the vectors $\Omega_{s_l,t_l}^l$.
For any finite subset $E\subset \Lambda$, there exists a unique measure $\mu_E$ on $Y^2$ that satisfies:
$$\mu_E(\prod_{l\in\Lambda} X_l)=\prod_{l\in E}\mu_l(X_l)\times\prod_{l\in \Lambda\backslash E}\mu_l(X_l\cap \Delta Y_l).$$
Consider the measure class $\mathcal C$ of $Y^2$ generated by all the measures $\mu_E$.
Let $\mu$ be an element of the class $\mathcal C$.
It is clear that the $A$-bimodule $\LM$ is isomorphic to the direct integal of \Hs s $\{\h_{\underline s,\underline t},\, (\underline s,\underline t)\in Y^2\}$ over the measure $\mu$.
It means that 
$$\LM \simeq \int^\oplus_{Y^2} \h_{\underline s,\underline t}d\mu(\underline s,\underline t).$$
Consider the projection of $Y^2$ $p(\underline s,\underline t)=\underline t$.
Let 
$$\mu=\int_Y \mu_{\underline t} d\nu(\underline t)$$
be a disintegration of $\mu$ \wrt\ $p$ and let $\Y$ be the set of atoms of $\AM$.
By the construction of $\mu$, it is clear that $\Y$ is equal to the set of couples $(\underline s,\underline t)$ such that for any $l$ $(s_l,t_l)\in\Y_l$ and $s_l=t_l$ for all but a finite number of $l$.\end{proof}

\begin{cor}\label{coro_Tak_prod_tens_normalizer}
Let $N_{M_l}(A_l)$ be the group normalizer of the MASA $A_l\subset M_l$, and consider the \vna\, generated by those normalizers
$$\bigotimesl N_{M_l}(A_l)''\subset \bigotimesl \LMl.$$
Then, $$\NMA''=\bigotimesl N_{M_l}(A_l)''.$$
In particular, a tensor product of singular MASAs is a singular MASA.
\end{cor}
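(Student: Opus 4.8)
The plan is to derive the corollary from the main Theorem~\ref{theo_Tak_main_result} and from Theorem~\ref{theo_Tak_tensor}, by translating the desired equality of \vna s into an equality, up to null sets, of the associated \oreqre s. One inclusion is immediate: any unitary of $\NMAl$, embedded in $M$ through the $l$-th tensor factor, normalizes $A=\bigotimesl A_l$, so $\bigotimesl N_{M_l}(A_l)''\subseteq\NMA''$. The work is the reverse inclusion.

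For each $l\in\Lambda$, apply Theorem~\ref{theo_Tak_main_result} to the \masa\ $\AMl$ to get that the \oreqre\ induced by $\NMAl$ is $\equiv$-equivalent to the set of atoms $\Y_l$, and use Proposition~\ref{prop_Ncal} to fix a \cosugr\ $G_l<\NMAl$ with $\{G_l\cup A_l\}''=N_{M_l}(A_l)''$; then $\Ncal_{G_l}\equiv\Y_l$, say with the two relations agreeing off $N_l^2$ for some $\nu_l$-null set $N_l\subset Y_l$. Next, let $H<\NMA$ be the countable subgroup generated by the tensor-factor embeddings of all the $G_l$. Since an element of $H$ is a finite word in these unitaries, it alters only finitely many coordinates of a point of $Y$, acting on each of them through the corresponding map from $\Theta^{G_l}$; hence $\Ncal_H$ is, up to a null set, exactly the ``restricted product'' consisting of the pairs $(\underline s,\underline t)$ with $(s_l,t_l)\in\Ncal_{G_l}$ for every $l$ and $s_l=t_l$ for all but finitely many $l$.

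Now set $N=\bigcup_l q_l^{-1}(N_l)\subseteq Y$, where $q_l\colon Y\to Y_l$ is the $l$-th coordinate projection; this is a $\nu$-null set, being a countable union of cylinder sets of measure zero. A short point-chase shows that off $N^2$ the restricted product of the $\Ncal_{G_l}$ coincides with the restricted product of the $\Y_l$, which by Theorem~\ref{theo_Tak_tensor} is precisely $\Y$. Thus $\Ncal_H\equiv\Y$, and Theorem~\ref{theo_Tak_main_result} applied to $\AM$ gives $\Ncal_H\equiv\Ncal$. By Proposition~\ref{prop_Ncal} this forces $\{H\cup A\}''=\NMA''$. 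Finally, since $H$ is generated by the $G_l$, since $A=\bigotimesl A_l$, and since $\{G_l\cup A_l\}''=N_{M_l}(A_l)''$, the realization of an infinite tensor product as the \vna\ generated by its factors (as already used earlier in this section) identifies $\{H\cup A\}''$ with $\bigotimesl N_{M_l}(A_l)''$, which is the asserted equality. The ``in particular'' statement follows at once: if each $\AMl$ is singular, then $N_{M_l}(A_l)''=A_l$, whence $\NMA''=\bigotimesl A_l=A$.

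I expect the main difficulty to lie in the measure-theoretic bookkeeping of the third step: one must verify that the countably many factorwise null sets $N_l$, witnessing $\Ncal_{G_l}\equiv\Y_l$, can be reassembled through the cylinder sets $q_l^{-1}(N_l)$ into a single $\nu$-null set governing the equivalence $\Ncal_H\equiv\Y$ in the infinite product, and likewise that $\Ncal_H$ really is the full restricted product and not a proper subrelation of it (this uses that one finite word in the $G_l$ simultaneously witnesses membership in each of the finitely many ``active'' coordinates). The companion algebraic fact---that the \vna\ generated by $\bigotimesl A_l$ together with the embedded $G_l$ equals $\bigotimesl\{G_l\cup A_l\}''$---is routine given the infinite-tensor-product formalism already in use.
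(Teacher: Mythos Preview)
Your proposal is correct and follows essentially the same route as the paper: pick countable $G_l<\NMAl$ with $\Ncal_{G_l}\equiv\Y_l$, form the restricted direct product group inside $\NMA$, identify its orbit relation with the restricted product of the $\Ncal_{G_l}$ and hence (via Theorem~\ref{theo_Tak_tensor}) with $\Y$, and conclude with Proposition~\ref{prop_Ncal}. The only difference is that you spell out the null-set bookkeeping (the sets $N_l$ and $N=\bigcup_l q_l^{-1}(N_l)$) which the paper leaves implicit, and you work with $\{G_l\cup A_l\}''$ rather than $G_l''$; both variants lead to the same conclusion.
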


\begin{proof}
For any $l\in\Lambda$ consider a countable subgroup $G_l<\NMAl$ such that $G_l''=\NMAl''\subset M_l$.
By proposition \ref{prop_Ncal} and theorem \ref{theo_Tak_main_result} we have that the orbit \eqre\ $\Ncal_{G_l}$ is equivalent to $\Y_l$.
Let $G$ be the countable subgroup of $\NMA$ generated by elements of the form $u=\otimesl u_l$, where $u_l\in G_l$ and $u_l=1$ for all but finitely many $l\in\Lambda$.
We see immediately that the orbit \eqre\ induced by $G$ is equal to the set of couples $(\underline s,\underline t)$ such that for any $l\in\Lambda$, $(s_l,t_l)\in\Ncal_{G_l}$ and $s_l=t_l$ for all but finitely many $l\in\Lambda$.
Therefore the theorem \ref{theo_Tak_main_result} implies that $\Ncal_G=\Y$.
Hence, by proposition \ref{prop_Ncal} we have that $G''=\NMA''\subset M$.
By construction 
$$G''=\bigotimesl G_l''=\bigotimesl\NMAl''\subset\bigotimesl\LMl.$$
Therefore, $$\NMA''=\bigotimesl \NMAl''.$$
\end{proof}

%%%%%%%%%%%%%%%%%%%%%%%%%%%%%%%%%%%%%%%%%%%%%%%%%%%%%%%%%%%%%%%%%%%%%%%%%%%%%%%%%%%%%%%%%%%%%%%%%%%%%%%%%%%%%%%%%%%%%%%%%%%%%%%%%%%%%%%%%%%%%%%%%%%%%%%

%%%%%%%%%%%%%%%%%%%%%%%%%%%%%%%%%%%%%%%%%%%%%%%%%%%%%%%%%%%%%%%%%%%%%%%%%%%%%%%%%%%%%%%%%%%%%%%%%%%%%%%%%%%%

\end{document}